
\documentclass[11pt, reqno, letterpaper]{amsart}

\usepackage{amssymb}

\usepackage{color}

\usepackage{bbm}

\usepackage[margin = 1in]{geometry}

\usepackage{hyperref}
\usepackage{url}

\parindent1em
\sloppy
\pagestyle{plain}

\allowdisplaybreaks   


\theoremstyle{plain}
\newtheorem{de}{Definition}[section]
\newtheorem{lem}[de]{Lemma}
\newtheorem{prop}[de]{Proposition}
\newtheorem{cor}[de]{Corollary}
\newtheorem{thm}[de]{Theorem}
\newtheorem{lemma}[de]{Lemma}

\theoremstyle{definition}
\newtheorem{rem}[de]{Remark}
\newtheorem{exa}[de]{Example}


\newcommand{\supp}{\operatorname{supp}}

\DeclareMathOperator{\curl}{curl}
\DeclareMathOperator{\Div}{div}
\DeclareMathOperator{\tr}{tr}

\newcommand{\RR}{{\mathbb{R}}}
\newcommand{\NN}{{\mathbb{N}}}

\newcommand{\cB}{{\mathcal{B}}}

\newcommand{\cH}{{\mathcal{H}}}

\newcommand{\ph}{\varphi}
\newcommand{\ep}{\varepsilon}

\newcommand{\D}{\mathrm{d}}
\newcommand{\dd}{\,\mathrm{d}}
\newcommand{\e}{\mathrm{e}}

\newcommand{\sym}{\mathrm{sym}}
\newcommand{\maxi}{\mathrm{max}}
\newcommand{\tdiv}{\mathrm{div}}
\newcommand{\tcurl}{\mathrm{curl}}
\newcommand{\lin}{\mathrm{lin}}
\newcommand{\nl}{\mathrm{nl}}

\newcommand{\DIV}{\operatorname{div}}
\newcommand{\CURL}{\operatorname{curl}}

\newcommand{\hra}{\hookrightarrow}

\newcommand{\beq}{\begin{equation}}
\newcommand{\eeq}{\end{equation}}


\newcommand{\ol}{\overline}
\newcommand{\wh}{\widehat}

\numberwithin{equation}{section}

\newcommand{\vertiii}[1]{{\left\vert\kern-0.25ex\left\vert\kern-0.25ex\left\vert #1 
    \right\vert\kern-0.25ex\right\vert\kern-0.25ex\right\vert}}

\begin{document}


\title{Exponential decay of quasilinear Maxwell equations with interior conductivity}

\author{Irena Lasiecka}
\address{I. Lasiecka, Department of Mathematical Sciences,
University of Memphis, Memphis, TN 38152, USA}
\email{lasiecka@memphis.edu}

\author{Michael Pokojovy}
\address{M. Pokojovy, Department of Mathematical Sciences, University of Texas at El Paso,
500 W University Ave, El Paso, TX 79968, USA}
\email{mpokojovy@utep.edu}

\author{Roland Schnaubelt}
\address{R. Schnaubelt, Department of  Mathematics,
Karlsruhe Institute of Technology, 76128 Karlsruhe, Germany}
\email{schnaubelt@kit.edu}

\thanks{MP and RS gratefully acknowledge financial support from the Deutsche Forschungsgemeinschaft (DFG) through CRC 1173.
IL research is partially supported by the NSF-DMS Grant Nr  1713506.}

\keywords{Quasilinear Maxwell equations; boundary conditions of perfect conductor; inhomogeneous anisotropic material laws; 
global existence; exponential stability}

\subjclass[2000]{
    35Q61,   
    35F61,   
    35A01,   
    35A02,   
    35B40,   
    35B65
}

\date\today

\begin{abstract}
We consider a quasilinear nonhomogeneous, anisotropic Maxwell system in a bounded smooth domain of $\mathbb{R}^{3}$
with a strictly positive conductivity subject to the boundary conditions of a perfect conductor.
Under appropriate regularity conditions, adopting a classical $L^{2}$-Sobolev solution framework,
a nonlinear energy barrier estimate is established for local-in-time $H^{3}$-solutions to the Maxwell system
by a proper combination of higher-order energy and observability-type estimates under a smallness assumption 
on the initial data.  
Technical complications due to quasilinearity, anisotropy and the lack of solenoidality, etc., are addressed.
Finally, provided the initial data are small, the barrier method is applied to prove that local solutions exist globally 
and exhibit an exponential decay rate.
\end{abstract}

\maketitle



\section{Introduction}
\label{SECTION_INTRODUCTION}

In this work we establish global existence and exponential decay for the quasilinear Maxwell system with strictly positive 
conductivity and small initial fields.
Being the foundation of electro-magnetic theory, the Maxwell equations 
\begin{equation*}
	\partial_{t} D = \CURL H - J \quad \text{ and } \quad
	\partial_{t} B = -\CURL H, \qquad t > 0, \ \  x \in \Omega,
	\label{EQUATION_FARADAY_AND_AMPERE_LAWS}
\end{equation*}
connect the electric fields $E$ and $D$, the magnetic
fields $B$ and $H$, and the current $J$ via Amp\`{e}re's circuital law and  Faraday's law of induction.
Here $\Omega\subseteq \RR^3$ is a simply connected, bounded domain with a smooth boundary $\Gamma$
in $C^5$ and outer unit normal $\nu$.
In our analysis, we take $(E,H)$ as the state variables and postulate the instantaneous nonlinear material laws
\begin{equation*}
    \label{EQUATION_NONLINEAR_INSTANTANEOUS_MATERIAL_LAWS}
    D = \varepsilon(x, E) E \quad \text{ and } \quad B = \mu(x, H) H
\end{equation*}
with nonlinear, nonhomogeneous, anisotropic tensor-valued permittivity $\ep$ and permeability $\mu$.
 We further employ linear Ohm's law
\begin{equation*}
    \label{EQUATION_OHM_LAW_GENERAL}
    J = \sigma(x) E
\end{equation*}
with a nonhomogeneous, anisotropic conductivity tensor $\sigma$. Imposing the boundary conditions
of a perfect conductor, we arrive at the quasilinear Maxwell system
\begin{align}
    \notag
    \partial_t \big(\ep(x,E(t,x))E(t,x)\big) &= \curl H(t,x) -\sigma(x) E(t,x), & t &\ge 0, \ x\in \Omega, \\
    \notag
    \partial_t \big(\mu(x,H(t,x))H(t,x)\big) &= -\curl E(t,x), & t &\ge 0, \ x\in \Omega, \\
    \label{eq:maxwell0}
    E(t,x) \times \nu(x)&=0, \quad \nu(x) \cdot \mu\big(x,H(t,x)\big)H(t,x)=0, & t &\ge0, \ x\in \Gamma, \\
    \notag
    E(0, x)&=E_0(x),  \qquad H(0, x)=H_0(x), & x &\in \Omega,
\end{align}
with initial fields $(E_0, H_0)$ that satisfy the compatibility conditions \eqref{ass:comp}.
We note that the Gaussian laws  \eqref{eq:div0} for charges and the magnetic boundary condition in \eqref{eq:maxwell0}
follow from \eqref{ass:comp} and the other equations in \eqref{eq:maxwell0}. 

In \eqref{ass:coeff1} and \eqref{ass:coeff2} to follow we impose symmetry assumptions on $\ep$ and $\mu$ under which
\eqref{eq:maxwell0} becomes a symmetric quasilinear hyperbolic system. For such systems in the full space 
case $\Omega=\RR^3$, one has a well-developed local well-posedness theory of $\cH^3$--valued solutions due to Kato \cite{Ka}.
However, the above problem has a characteristic boundary which could lead to a loss of regularity in the normal direction.
The available general existence results work in Sobolev spaces of very high order and with weights vanishing at the 
boundary, see \cite{Gu, Se1996} 
as well as \cite{Ra1985} for tangential regularity.
For absorbing boundary conditions, local existence results in $\cH^3$ were given in  \cite{PiZa1995}. However, in our case a 
local well-posedness theory in $\cH^3$ was established only recently in the papers \cite{Sp0, Sp2, Sp1}.
We will strongly rely on these results.

In our main Theorem~\ref{thm:main} we show that local solutions are indeed global and exhibit exponential decay rates
in $\cH^3$, provided that the initial fields are small and that the conductivity is strictly positive.
In  Remark~2 of \cite{El},  it is explained that certain solutions do not decay to 0 
if one drops the simple connectedness of  $\Omega$ or the magnetic compatibility conditions in \eqref{ass:comp},
even for linear $\ep$ and $\mu$.
It should be emphasized that, while decay rates for the Maxwell system have been studied in a number of works 
(viz.\ \cite{AnPo2018, El, ElLaNi2002.2, ET12, komornik, lagnese, NiPi2004, phung} and references therein), 
the cited studies only allow for linear permittivity and permeability and partly deal with constant isotropic coefficients.
Stabilization results for general hyperbolic systems typically concern damping mechanism acting on all components
of the solutions, see \cite{rauch},  whereas in our
Maxwell system the dissipation via conductivity only affects the electric field. For $\Omega=\RR^m$
the paper  \cite{BZ} allows for partial damping even in the quasilinear case, but its assumptions exclude 
the Maxwell equations. For the quasilinear Maxwell system we are only aware of a few
results on the full space  $\Omega=\RR^3$ that establish global existence and decay for small and
smooth solutions, see \cite{LZ, Ra, Spe}. These works rely on dispersive estimates which are not available on 
bounded domains. On the other hand, it is known that blow-up in $W^{1,\infty}$ or $\cH(\curl)$ can occur 
in various cases, see \cite{DNS} and the references therein. 
Up to date, no global results on \emph{quasilinear} Maxwell systems with boundary conditions are known 
in the literature. 

In the linear case, the above mentioned  decay results are based on energy estimates exhibiting dissipation and 
observability-type lower estimates for this dissipation. See \cite{phung} for constant scalar $\ep,\mu>0$
and the very recent
contribution \cite{El} for the general case. For the quasilinear problem, in Propositions~\ref{prop:energy} and \ref{prop:lower}
we show such inequalities for the fields $(E,H)$ and their time derivatives, where we have to admit
corrections terms involving also  space derivatives of the fields.  As in \cite{El, phung}, we use 
vector potentials and Helmholtz decompositions of the fields in the proof of 
the observability-type estimate in Proposition~\ref{prop:lower}. The correction terms are small up to a certain time
if the data are small, but they are small in much stronger norms than the quantities controlled by dissipation.
Therefore, to deal with the quasilinear situation one has to establish an additional, rather deep
regularity result with constants independent  of the time interval. It is provided by our Proposition~\ref{prop:ze}, 
where we bound the spatial derivatives up to order $3-k$ of the fields $\partial_t^k(E,H)$ by the $L^2$ norms of the 
time derivatives alone, where $k\in\{0,1,2\}$.
With this result at hand, one can then  easily show Theorem~\ref{thm:main}
by adopting the widely known ``barrier method'' from the linear case and using smallness, see Section~\ref{SECTION_HIGHER_ENERGY_OBSERVABILITY}. 
In a different situation, a similar procedure was used in \cite{MePoSH2007, MRRa2002} and recently in \cite{LPW} for quasilinear thermoelastic plate equations.

The lengthy proof of Proposition~\ref{prop:ze} is relegated to the last section. Its very first step still
is rather simple. Using the Maxwell system \eqref{eq:maxwell0} one can estimate the $L^2$ norm of 
$\curl H(t)$ by that of $\partial_t E(t)$. (We note that we work on a time interval where we can control, e.g., the $\cH^3$ norm of the fields uniformly.)
We further have the magnetic boundary condition in \eqref{eq:maxwell0}
and the divergence relation $\Div(\mu(H)H)=0$ from \eqref{eq:div0}. A variant of well-known ``elliptic''
$\tcurl$-$\tdiv$ estimates then yields the bound $\|H(t)\|_{\cH^1}\le c\,\|\partial_t E(t)\|_{L^2}$.
This procedure also works for time and tangential space derivatives of $H$,
 but not for normal ones since they destroy the boundary conditions. For the electric field
this approach  entirely fails because the divergence relation \eqref{eq:div0} for $E$ is spoiled by the conductivity term. 
For $E$ and its tangential derivatives  we have to resort to the (weaker) energy estimate taking advantage 
of the better properties of $H$. The normal derivatives of both fields  are then treated by a ``$\tcurl$-$\tdiv$-strategy:''
Using formulas derived in Section~\ref{SECTION_AUXILIARY_RESULTS} one can solve in the Maxwell system
for the normal derivative of the tangential components of the fields, and in the divergence relations for
 the normal derivative of the normal components. In the latter case the anisotropy of the material laws becomes
 a major problem. In the proof of Proposition~\ref{prop:ze} one has to apply these ideas on differentiated
 modifications of the Maxwell system and perform an intricate iteration over the regularity levels.

We briefly outline the rest of the paper. In the next section, our functional-analytic setting 
along with basic notations is introduced and the main result is stated.
In Section~\ref{SECTION_ENERGY_AND_OBSERVABILITY_INEQUALITIES}, we establish energy and observability-type inequalities for 
local solutions to system (\ref{eq:maxwell0}). Subsequently, in Section~\ref{SECTION_HIGHER_ENERGY_OBSERVABILITY}, these 
estimates are improved to incorporate higher-order spatial derivatives which then allows us to show the main result.
Section~\ref{SECTION_AUXILIARY_RESULTS} presents elliptic-type $\tcurl$-$\tdiv$-estimates and introduces  our 
$\tcurl$-$\tdiv$-strategy, which are subsequently adopted in Section~\ref{SECTION_REGULARITY_RESULT} to prove
our core regularity result, i.e., Proposition~\ref{prop:ze}.

\section{Problem setting and main result}
\label{SECTION_SETTINGS_AND_RESULTS}

Let $\Omega\subset \RR^3$ be a bounded domain with a boundary $\Gamma := \partial \Omega$ of class $C^5$ 
and the outer unit normal vector $\nu$.
For $T>0$, we set $J = J_T =[0, T]$, $\Omega_T = (0,T)\times \Omega$ and $\Gamma_T= (0,T)\times \Gamma$.
For the sake of brevity, the same notation will often be used for spaces of scalar and vector-valued functions.
Also, we sometimes write $\cH^k$ instead of the Sobolev space $\cH^{k}(\Omega)$, etc., if the domain of integration 
is clear from the context.  Spaces on $\Gamma$ are always equipped with the surface measure denoted by $\dd x$.  
Our basic assumptions  are 
\begin{equation}
\label{ass:coeff1}
\begin{split}
    &\ep,\mu \in C^3(\ol{\Omega}\times \RR^3, \RR^{3\times3}_{\sym}), \quad
    \sigma\in  C^3(\ol{\Omega}, \RR^{3\times3}_{\sym}) \quad \text{ and } \\
    &\ep(x,0) \ge 2\eta I, \quad \mu(x,0) \ge 2\eta I, \quad \sigma(x)\ge \eta I \qquad \text{for all } x\in \ol{\Omega}
\end{split}
\end{equation}
and for some constant $\eta > 0$, where $C^3(\ol{\Omega})$ is the space of $C^3$--functions $v$
such that $v$ and its derivatives up to the third order possess a continuous extension to $\Gamma$. 
We introduce the matrix-valued functions $\ep^\D$ and $\mu^\D$ given by
\begin{equation*}
	\ep^\D_{jk}(x,\xi)= \ep_{jk}(x,\xi) + \sum_{l=1}^ 3\partial_{\xi_k}\ep_{jl}(x,\xi) \,\xi_l , \qquad
    \mu^\D_{jk}(x,\xi)= \mu_{jk}(x,\xi) + \sum_{l=1}^ 3\partial_{\xi_k}\mu_{jl}(x,\xi) \,\xi_l
\end{equation*}
for $x\in \ol{\Omega}$, $\xi\in\RR^3$ and $j,k\in\{1,2,3\}$, which arise when differentiating the  left-hand 
side of   \eqref{eq:maxwell0}. We further assume that
\begin{equation}
    \label{ass:coeff2}
    \partial_\xi \ep,\partial_\xi \mu \in C^3(\ol{\Omega}\times \RR^3, \RR^{3\times3}), \quad
    \ep^\D=(\ep^\D)^\top, \quad \text{ and } \quad \mu^\D=(\mu^\D)^\top.
\end{equation}
By continuity, there exists a radius $\tilde{\delta} \in (0,1]$ such that
\begin{equation}
    \label{est:coeff-lower}
    \ep(x,\xi), \mu(x,\xi), \ep^\D(x,\xi), \mu^\D(x,\xi), \sigma(x)\ge \eta I
\end{equation}
for all $\xi \in\RR^3$ with $|\xi|\le \tilde{\delta}$ and $x\in \ol{\Omega}$.

\begin{exa}
Let $\ep_{\lin}\in C^3(\ol{\Omega}, \RR^{3\times3}_{\sym})$ satisfy $ \ep_{\lin}\ge 2\eta I$. We specify two nonlinear 
terms in the sum $\ep(x,E) = \ep_{\lin}(x) + \ep_{\nl}(x,E)$ so that
the conditions  \eqref{ass:coeff1} and \eqref{ass:coeff2}  are valid.  One can take Kerr-type isotropic nonlinearities 
$\ep_{\nl}(x,E)= a(x)\ph(|E|^2)I$ for scalar functions $a\in  C^3(\ol{\Omega})$ and $\ph\in C^4([0, \infty))$ 
with $\ph(0)=0$. A typical anistropic  example is furnished by
\[ \ep_{\nl}(x,E) = \Big( \sum\nolimits_{j,k=1}^3 \chi_i^{jkl}(x)E_jE_k\Big)_{il}\]
for scalar coefficients $ \chi_i^{jkl}\in C^3(\ol{\Omega})$, cf.\ \cite{MN}. Because of the triple sum
in $\ep_{\nl}(x,E)E$,  the tensor  $(\chi_i^{jkl})_{i,j,k,l}$  has to be 
symmetric in $\{j,k,l\}$. Our assumptions also require symmetry in $\{i,l\}$, i.e., we can only
prescribe $\chi_i^{jkl}$ for, say, $1\le i\le j\le l\le 3$.
\end{exa}

We write $\tr_n u$ for the trace of the normal component $u\cdot \nu$ on $\Gamma$,
while $\tr_t u$ stands for the tangential trace $u\times \nu$ on $\Gamma$. 
We also use its rotated counterpart
$\tr_\tau u = \nu \times (\tr_t u)$, which is the tangential component $\tr u-(\tr_n u) \nu$ of the full trace $\tr u$. 
It is well known that the mappings 
\begin{equation*}
    \tr_n : \cH(\tdiv)\to \cH^ {-1/2}(\Gamma) \quad \text{ and } \quad \tr_t : \cH(\tcurl)\to \cH^ {-1/2}(\Gamma)^3
\end{equation*}
are continuous, where the Hilbert spaces
\begin{align*} 
    \cH(\tcurl) &= \big\{u\in L^2(\Omega)^3\,|\, \curl u \in L^2(\Omega)^3\big\}  \quad \text{ and } \quad
    \cH(\tdiv) = \big\{u\in L^2(\Omega)^3\,|\, \Div u \in L^2(\Omega)\big\}
\end{align*} 
are endowed with their natural norms. See Theorems~IX.1.1 and IX.1.2 in \cite{DL}.
 
Let $E_0, H_0\in \cH^3(\Omega)^3$. To express the compatibility conditions,
we set
\begin{align}\label{def:comp}
    E_0^1&= \ep^\D(E_0)^{-1}[\curl H_0-\sigma E_0], \qquad H_0^1= -\mu^\D(H_0)^{-1}\curl E_0,\notag\\
    E_0^2&=   \ep^\D(E_0)^{-1}\big[\curl H_0^1-\sigma E_0^1 - (\nabla_E\ep^\D(E_0) E_0^1)\cdot E_0^1\big],\\
    H_0^2&=  - \mu^\D(H_0)^{-1}\big[\curl E_0^1 + (\nabla_H\mu^ \D(H_0)H_0^ 1)\cdot H_0^1\big],\notag
\end{align}
where we put $((\nabla A)\xi\cdot \eta)_j= \big(\sum_{i,k}\partial_i A_{jk} \xi_k \eta_i\big)_j$.
The initial fields shall satisfy the divergence and boundary conditions
\begin{equation}
    \label{ass:comp}
        \Div(\mu(H_0)H_0)=0, \quad  \tr_n(\mu(H_0)H_0)=0, \qquad
        \tr_t E_0 = \tr_t E_0^1= \tr_t E_0^2=0.
\end{equation}
Letting $C_S>0$ be the norm of the Sobolev embedding $\cH^2(\Omega)\hra L^ \infty(\Omega)$,
we put $\delta_0 = \min\{1,\tilde{\delta}/C_S\}$. 

Take $T > 0$ and $\delta\in(0,\delta_0]$. The (small) parameter $\delta>0$ will be fixed in subsequent proofs. 
The local well-posedness result Theorem~5.3 in \cite{Sp2}
provides a radius $r(T,\delta)\in  \big(0, r(T,\delta_0)\big]$ such that, for all $r\in \big(0, r(T,\delta)\big]$
and initial data  $E_0, H_0 \in \cH^3(\Omega)^3$ fulfilling the conditions \eqref{ass:comp} and
 the smallness assumption
\begin{equation}
    \label{est:data}
    \|E_0\|_{\cH^3(\Omega)}^2 + \|H_0\|_{\cH^3(\Omega)}^2 \le r^2,
\end{equation}
there exists a maximal existence time $T_{\maxi}\in (T,\infty]$ and a unique solution
\begin{equation}\label{local}
    (E,H)\in \bigcap_{k=0}^3 C^k\big([0, T_{\maxi}), \cH^{3-k}(\Omega)\big)^6=:G^3
\end{equation}
to the quasilinear Maxwell system \eqref{eq:maxwell0}. 
The fields $(E, H)$ further satisfy the estimate
\begin{equation}
    \label{est:delta}
    \max_{k\in\{0,1,2,3\}} \max_{t \in [0, T]} \big(\|\partial_t^k E(t)\|_{\cH^{3-k}(\Omega)}^2 
        + \|\partial_t^k H(t)\|_{\cH^{3-k}(\Omega)}^ 2 \big)\le \delta^2\le1
\end{equation}
and the divergence equations
\begin{equation}
    \label{eq:div0}
    \begin{split}
    \Div\big(\mu(H(t))H(t)\big) &= 0, \\
    \Div\big(\ep(E(t))E(t)\big) &= \Div\big(\ep(E_0)E_0\big) - \int_0^t \Div \big(\sigma E(s)\big) \dd s.
    \end{split}
\end{equation}
on $\Omega$ for all $t\in [0,T_{\maxi})=:J_{\maxi}$. (We  write $E(t)$ instead of $E(t,\cdot)$, etc.)

We note that Theorem~5.3 in \cite{Sp2}  is not concerned with \eqref{eq:div0} and  the second boundary condition in the system
\eqref{eq:maxwell0}. These formulas follow from the other equations in
 \eqref{eq:maxwell0} and the assumption \eqref{ass:comp} in a standard way, see Lemma~7.25 of \cite{Sp0}.

Inequality \eqref{est:delta} will frequently be invoked in this article, sometimes without being explicitly mentioned. 
In addition to rendering the solution small, it also provides a crucial uniform bound. 
Observe that along solutions to \eqref{eq:maxwell0} fulfilling \eqref{est:delta}, 
the lower bound \eqref{est:coeff-lower} is valid for $t \in [0, T]$.

We now fix $T = 1$ yielding the radius $r(\delta):= r(\delta,1)$. 
Given initial fields $(E_0, H_0)$ satisfying  \eqref{ass:comp} and \eqref{est:data}, we introduce the time
\begin{equation}
    \label{def:T*}
    T_* = \sup\big\{ T\in[1,T_{\maxi})\,|\, \eqref{est:delta} \text{ is valid for } t\in[0,T]\big\}.
\end{equation}
The bound \eqref{est:delta} is thus true on $[0,T_*)=:J_*$. If $T_*<\infty$, then the blow-up condition in Theorem~5.3
of \cite{Sp2} implies that $T_{\maxi} >T_*$ and hence
\beq \label{eq:contr}
z(T_*):=\max_{k\in\{0,1,2,3\}} \big(\|\partial_t^k E(T_*)\|_{\cH^{3-k}(\Omega)}^2 
        + \|\partial_t^k H(T_*)\|_{\cH^{3-k}(\Omega)}^ 2 \big) = \delta^2 \qquad (\text{if \ }T_*<\infty)
\eeq
by continuity. 

We work with time-differentiated versions of \eqref{eq:maxwell0}.  For the sake of brevity, we set
\begin{equation}
    \label{def:ep-mu-hat}
    \wh \ep_k = \begin{cases} \ep(E),& k=0, \\  \ep^\D(E),& k\in\{1,2,3\},\end{cases} \qquad
    \wh \mu_k = \begin{cases} \mu(H),& k=0, \\ \mu^\D(H),& k\in\{1,2,3\}.\end{cases}
\end{equation}   
For $k \in \{0,1,2,3\}$, we then obtain the system
\begin{align} 
    \label{eq:maxwell1}
    \partial_t (\wh\ep_k \partial_t^kE)&= \curl \partial_t^k H -\sigma \partial_t^k E - \partial_t f_k, 
    & t &\in J_{\maxi}, \ x\in \Omega, \notag \\
    \partial_t (\wh\mu_k\partial_t^k H) &= -\curl \partial_t^k E -\partial_t g_k, & t &\in J_{\maxi}, \ x\in \Omega,\\
    \tr_t\partial_t^ k E &=0, \quad \tr_n(\wh\mu_k\partial_t^k H) = -\tr_n g_k, & t &\in J_{\maxi},\ x\in\Gamma,\notag
\end{align}
with the commutator terms
\begin{equation}
    \label{def:fk}
    \begin{split}
    f_0&=f_1=0, \ \ f_2= \partial_t \ep^\D(E) \,\partial_t E, \ \
    f_3= \partial_t^2 \ep^\D(E) \, \partial_t E + 2  \partial_t\ep^ \D(E)\, \partial_t^2 E,\\
    g_0&=g_1=0, \ \ g_2= \partial_t \mu^\D(H) \,\partial_t H, \ \
    g_3= \partial_t^2 \mu^\D(H) \, \partial_t H + 2  \partial_t\mu^\D(H)\, \partial_t^2 H.
    \end{split}
\end{equation} 
Equation \eqref{eq:div0} further yields the divergence relations
\begin{equation}
    \label{eq:div}
    \Div(\mu^\D(H)\partial_t^ kH)= - \Div g_k,\qquad
    \Div(\ep^\D(E)\partial_t^ kE)= -\Div(\sigma \partial_t^{k-1} E +f_k)
\end{equation} 
for $k\in\{1,2,3\}$. Estimate \eqref{est:z} below shows that all functions $\partial_t f_k$, $\partial_t g_k$,
$\Div f_k$, and $\Div g_k$ belong to $L^\infty(J_*, L^2(\Omega))$.
For $k = 3$, the evolution equations in \eqref{eq:maxwell1} are interpreted in $\cH^{-1}(\Omega_T)$
while the divergence operator in \eqref{eq:div} is understood in $\cH^{-1}(\Omega)$. 
Since the inhomogenities belong to $L^2$, the traces in \eqref{eq:maxwell1}  exist 
in $\cH^ {-1/2}(\Gamma)$, cf.\ Section~2.1 of \cite{Sp0}.

For the energy estimate, it is useful to consider an equivalent version of \eqref{eq:maxwell1}, viz.\
\begin{align} 
    \label{eq:maxwell2}
    \ep^ \D(E)\, \partial_t\partial_t^kE&= \curl \partial_t^k H -\sigma \partial_t^k E -  \tilde{f}_k,
    & t&\in J_{\maxi}, \ x\in \Omega, \notag \\
    \mu^ \D(H)\,\partial_t\partial_t^k H&= -\curl \partial_t^k E -\tilde{g}_k, & t &\in J_{\maxi}, \ x\in \Omega,\\
    \tr_t \partial_t^ k E &=0, & t &\in J_{\maxi}, \ x\in \Gamma, \notag
\end{align}
for $k\in\{0,1,2,3\}$ with the new commutator terms
\begin{equation} 
    \label{eq:tilde-fk}
    \tilde{f}_k = \sum_{j=1}^k \binom{k}{j} \partial_t^ j \ep^ \D(E)\, \partial_t^{k+1-j} E,\qquad
    \tilde{g}_k = \sum_{j=1}^k \binom{k}{j} \partial_t^ j \mu^ \D(H)\, \partial_t^{k+1-j} H,
\end{equation} 
where we put $\tilde{f}_0 = \tilde{g}_0 = 0$. We further introduce the quantities
\begin{align} 
    \label{def:dez}
    e_k(t)&= \frac12\max_{j\in \NN_0, j\le k}\big( \|\wh\ep_k^{1/2} \partial_t^j E(t)\|_{L^2(\Omega)}^2 
      + \|\wh\mu_k^{1/2} \partial_t^j H(t)\|_{L^2(\Omega)}^2 \big),  \qquad e=e_3,\notag\\
    d_k(t)&= \max_{j\in \NN_0, j\le k} \|\sigma^ {1/2} \partial_t^j E(t)\|_{L^2(\Omega)}^2,\qquad d=d_3, \\
    z_k(t)&= \max_{j\in \NN_0, j\le k} \big( \|\partial_t^j E(t)\|_{\cH^{k-j}(\Omega)}^2 
      + \|\partial_t^j H(t)\|_{\cH^{k-j}(\Omega)}^2 \big), \qquad z=z_3, \notag
\end{align}
for $k \in \{0,1,2,3\}$ and $t\in J_{\maxi}$. The coefficients of the energies  $e_k$ are chosen in view of Lemma~\ref{lem:helmholtz}.
Throughout the paper, $c_k$ or $c$ are positive constants that do not depend on $t \in [0, T_*)$, $T_*$, $\delta\in(0,\delta_0]$, $r\in (0,r(\delta_0)]$, 
and $(E_0,H_0)$ satisfying the conditions \eqref{ass:comp} and \eqref{est:data}.

Using standard  methods (as in  Section~2 of \cite{Sp2}) and the estimate \eqref{est:delta}, one can show that
\begin{equation} 
    \label{est:z}
    \begin{split}
    \|\wh\ep_k(t)\|_\infty, \|\wh\mu_k(t)\|_\infty, \|\wh\ep_k^{-1}(t)\|_\infty, \|\wh\mu_k^ {-1}(t)\|_\infty&\le c,\\
    \|\partial^ \alpha\wh\ep_j(t)\|_{L^2(\Omega)}, \|\partial^ \alpha\wh\mu_j(t)\|_{L^2(\Omega)} 
       &\le c(z_k^{1/2}(t)+\delta_{\alpha_0=0}),\\
    \max_{k\in\{2,3\},j\in\{0,1\}}\big( \|\partial_t^ j f_k(t)\|_{\cH^{4-j-k}(\Omega)} 
    + \|\partial_t^ j g_k(t)\|_{\cH^{4-j-k}(\Omega)}\big)   &\le cz(t),\\
    \|f_2(t)\|_{L^2(\Omega)},\|g_2(t)\|_{L^2(\Omega)},\|f_3(t)\|_{L^2(\Omega)},\|g_3(t)\|_{L^2(\Omega)}
     &\le ce_2^{1/2}(t),\\
    \|\tilde f_k(t)\|_{\cH^{3-k}(\Omega)}, \|\tilde g_k(t)\|_{\cH^{3-k}(\Omega)} &\le cz(t)
    \end{split} 
\end{equation} 
for $j , k\in \{0,1,2,3\}$, $\alpha\in\NN_0^4$ with $|\alpha|=k>0$, and $ t\in J_*$.
The constants $c$ do not depend on $t$, and we set $\partial_0=\partial_t$, $\delta_{\alpha_0=0}=1$ if $\alpha_0=0$, 
and  $\delta_{\alpha_0=0}=0$ if $\alpha_0>0$. The term $+c$ on the right-hand side of the second line in \eqref{est:z}
arises if all derivatives in $\partial^\alpha$ are applied to the $x$--variable of $\ep$ or $\mu$.

The main goal of this paper is to establish the global existence of the local solutions in \eqref{local}, assuming 
that the initial data are small enough. It is well known that global existence for quasilinear systems is closely related to 
the exponential decay of the resulting dynamics. The main bulk of the paper is thus devoted to the proof of this latter property. 
Our main result reads as follows.

\begin{thm}
    \label{thm:main}
    Let $\Omega \subset \RR^3$ be a bounded, simply connected domain with $\partial\Omega\in C^5$, 
    the coefficients satisfy  \eqref{ass:coeff1} and \eqref{ass:coeff2}, and the initial data $E_0, H_0\in \cH^3(\Omega)^3$ 
    fulfill \eqref{ass:comp} and \eqref{est:data}. Then there exists a radius $r>0$ in 
    assumption \eqref{est:data} and constants 
    $M, \omega>0$ such that the solution $(E, H)$ of the Maxwell system \eqref{eq:maxwell0} exists  for all $t \ge 0$
    and is bounded by
    \begin{equation*}
        \max_{j\in \NN_0, 0\le j\le 3} \big( \|\partial_t^j E(t)\|_{\cH^{3-j}(\Omega)}^2 
        + \|\partial_t^j H(t)\|_{\cH^{3-j}(\Omega)}^2 \big) 
          \le M\e^{- \omega t} \|(E_0, H_0)\|_{\cH^3(\Omega)}^2 \quad \text{ for all } t \ge 0.
    \end{equation*}
\end{thm}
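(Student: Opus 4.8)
\section*{Proof proposal}

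The plan is to prove Theorem~\ref{thm:main} by the barrier (continuation) method: I would derive the exponential decay of $z(t)$ on $J_*=[0,T_*)$ from \eqref{def:T*} with constants independent of $T_*$, and then use the blow-up alternative \eqref{eq:contr} to conclude $T_*=\infty$. The crucial structural point is that on $J_*$ the smallness bound \eqref{est:delta}, and hence the coercivity \eqref{est:coeff-lower}, hold, so that Propositions~\ref{prop:energy}, \ref{prop:lower} and \ref{prop:ze} are all available with uniform constants. First I would combine the energy estimate of Proposition~\ref{prop:energy}, which bounds $e(t)$ together with the time-integral of the dissipation $d$ by $e(s)$ plus correction terms, with the observability-type estimate of Proposition~\ref{prop:lower}, which controls the time-integral of $e$ by that of $d$ up to corrections. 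Over a window of fixed length $T_0$ (recall $T_*\ge1$), these combine in the classical fashion of the linear theory (cf.\ \cite{phung, El}): the observability controls $\int_s^{s+T_0} e\dd t$ by $\int_s^{s+T_0} d\dd t$, the energy estimate bounds $\int_s^{s+T_0} d\dd t$ by $e(s)-e(s+T_0)$ up to corrections, and the near-monotonicity of $e$ gives $T_0\,e(s+T_0)\le\int_s^{s+T_0}e\dd t$, so that altogether $e(s+T_0)\le q\,e(s)+(\text{corrections})$ with $q=C/(T_0+C)<1$.

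The crux is the nature of the correction terms. They stem from the quasilinearity, the anisotropy, and the lack of solenoidality of $E$, and by \eqref{est:z} and \eqref{est:delta} they appear as the full $\cH^3$-quantity $z$ carrying a small prefactor (schematically $z^{1/2}\le\delta$ times energy-level terms). Because the dissipation only reaches the electric field and its time derivatives in $L^2$, these corrections sit in a strictly stronger norm than anything the energy--dissipation mechanism sees, and cannot be absorbed as they stand. This is precisely where I would invoke Proposition~\ref{prop:ze}: it bounds $z(t)$ by the $L^2$-norms of the pure time derivatives, i.e.\ $z(t)\le c\,e(t)$ up to terms again small by \eqref{est:delta}. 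Substituting this turns every $z$-correction into an $e$-term with a small prefactor; fixing $\delta\in(0,\delta_0]$ small enough that $q+c\delta<1$, the one-step estimate sharpens to $e(s+T_0)\le q'\,e(s)$ with $q'<1$, uniformly on $J_*$.

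Iterating this contraction over the windows $[nT_0,(n+1)T_0]\subset J_*$ then gives $e(t)\le M_0\e^{-\omega t}e(0)$ for suitable $M_0,\omega>0$. A final application of Proposition~\ref{prop:ze} transfers the decay from the time-derivative $L^2$-norms to the full norm, $z(t)\le c\,e(t)\le M_1\e^{-\omega t}e(0)$ on $J_*$. Since the time derivatives at $t=0$ are determined by the spatial data through \eqref{def:comp} and the bounds \eqref{est:z}, one has $e(0)\le z(0)\le c\,\|(E_0,H_0)\|_{\cH^3(\Omega)}^2$, so that $z(t)\le M\e^{-\omega t}\|(E_0,H_0)\|_{\cH^3(\Omega)}^2$ for all $t\in J_*$.

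It remains to close the continuation, which is where the smallness of the data is finally used. By \eqref{est:data} we have $z(0)\le c\,r^2$, so the decay bound yields $z(t)\le Mc\,r^2$ on $J_*$. I would then fix $r>0$ so small that $Mc\,r^2<\delta^2$. If $T_*<\infty$, continuity and the blow-up alternative \eqref{eq:contr} force $z(T_*)=\delta^2$, whereas the uniform decay gives $z(T_*)\le Mc\,r^2<\delta^2$, a contradiction; hence $T_*=\infty$, the solution is global, and the stated estimate holds for all $t\ge0$. The main obstacle throughout is not the barrier bookkeeping but exactly the norm mismatch above: the conductivity damping is far too weak to see the quasilinear corrections directly, so the whole scheme rests on the time-uniform regularity gain of Proposition~\ref{prop:ze}, whose proof is correspondingly the deepest part of the work.
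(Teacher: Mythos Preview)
Your overall strategy is right and matches the paper: barrier method on $J_*$, using Propositions~\ref{prop:energy}, \ref{prop:lower} and \ref{prop:ze} with constants independent of $T_*$, then contradicting \eqref{eq:contr} for small $r$. The continuation bookkeeping at the end is also correct.

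However, there is a gap in how you chain the propositions. You claim that Proposition~\ref{prop:ze} ``bounds $z(t)$ by the $L^2$-norms of the pure time derivatives, i.e.\ $z(t)\le c\,e(t)$ up to terms again small by \eqref{est:delta}''. This is not what the proposition says. Its right-hand side contains $z(s)$ and the time integral $\int_s^t (e+z^{3/2})\dd\tau$, and neither of these is small merely because $z\le\delta^2$; they are of the same order as the quantities you are trying to control. In particular, your final transfer step ``$z(t)\le c\,e(t)\le M_1\e^{-\omega t}e(0)$'' fails: applying Proposition~\ref{prop:ze} with $s=0$ leaves an undamped $cz(0)$ on the right, and the integral $\int_0^t z^{3/2}\le\delta\int_0^t z$ only yields a Gronwall bound that \emph{grows} at rate $c\delta$. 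Likewise, in your window contraction $e(s+T_0)\le q\,e(s)+(\text{corrections})$, substituting Proposition~\ref{prop:ze} into the correction $\int_s^{s+T_0} z^{3/2}$ produces a $z(s)$ that you cannot convert into $e(s)$, so the iteration on $e$ alone does not close.

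The paper avoids this by reversing the order: it first substitutes Corollary~\ref{observe} (the combination of Propositions~\ref{prop:energy} and \ref{prop:lower}) into Proposition~\ref{prop:ze}, obtaining an inequality \emph{entirely in $z$},
\[
z(t)+\int_s^t z(\tau)\dd\tau \le Cz(s)+c\,z^2(t)+c\int_s^t z^{3/2}(\tau)\dd\tau,
\]
after which the superlinear terms are absorbed for $\delta$ small to give Proposition~\ref{prop:z}: $z(t)+\int_s^t z\le Cz(s)$. From this single inequality the decay of $z$ follows directly (since $z(\tau)\ge C^{-1}z(t)$ gives $(1+(t-s)C^{-1})z(t)\le Cz(s)$, and one iterates over windows as in Remark~\ref{rem:linear}). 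No separate contraction on $e$ or transfer step is needed. Your argument is easily repaired by adopting this route.
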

The proof of the theorem is given at the end of Section~\ref{SECTION_HIGHER_ENERGY_OBSERVABILITY}.

\section{Energy and observability-type inequalities}
\label{SECTION_ENERGY_AND_OBSERVABILITY_INEQUALITIES}

We start with a basic higher-order energy estimate establishing an explicit dissipation in the system due to the electric conductivity. 
\begin{prop}
    \label{prop:energy}
    We assume the conditions of Theorem~\ref{thm:main} except for the simple connectedness of $\Omega$.
    For $0\le s\le t< T_*$ and  $k\in \{0,1,2,3\}$, we obtain the inequality
    \begin{equation}\label{est:energy}
        e_k(t)+ \int_s^ t d_k(\tau)\dd \tau \le e_k(s) + c_1  \int_s^ t z^{3/2}(\tau) \dd \tau,
    \end{equation}
    where the constant $c_1$ does not depend on  $s$ and $t$.
\end{prop}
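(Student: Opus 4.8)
\emph{Strategy.} The plan is to run a higher-order energy method on the time-differentiated system, using the symmetry of $\ep^\D,\mu^\D$ to turn the leading terms into exact time derivatives and absorbing every quasilinear remainder into $z^{3/2}$. Fix $k\in\{0,1,2,3\}$ and an index $0\le j\le k$. I would pair the two evolution equations of \eqref{eq:maxwell2} (for $k=0$ rather the original system \eqref{eq:maxwell0}, whose coefficients match $\wh\ep_0=\ep(E)$, $\wh\mu_0=\mu(H)$) with $\partial_t^j E$ and $\partial_t^j H$ in $L^2(\Omega)$ and add them. Since $\ep^\D=(\ep^\D)^\top$ and $\mu^\D=(\mu^\D)^\top$ by \eqref{ass:coeff2}, the principal terms become
\[
\int_\Omega \ep^\D(E)\,\partial_t\partial_t^j E\cdot\partial_t^j E\dd x
 = \frac{\D}{\D t}\,\tfrac12\|\wh\ep_k^{1/2}\partial_t^j E\|_{L^2}^2
   -\tfrac12\int_\Omega \partial_t\ep^\D(E)\,\partial_t^j E\cdot\partial_t^j E\dd x,
\]
and analogously for $H$. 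After integration by parts the two $\curl$ terms collapse into the single boundary integral $\int_\Gamma(\partial_t^j H\times\partial_t^j E)\cdot\nu\dd x=\int_\Gamma \partial_t^j H\cdot\tr_t(\partial_t^j E)\dd x$, which vanishes by the perfect-conductor condition $\tr_t\partial_t^j E=0$; the conductivity contributes the nonnegative dissipation $\|\sigma^{1/2}\partial_t^j E\|_{L^2}^2$.

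This yields, for each $0\le j\le k$, a pointwise-in-time identity
\[
\frac{\D}{\D t}\,\tfrac12\big(\|\wh\ep_k^{1/2}\partial_t^j E\|_{L^2}^2+\|\wh\mu_k^{1/2}\partial_t^j H\|_{L^2}^2\big)
 +\|\sigma^{1/2}\partial_t^j E\|_{L^2}^2 = R_j,
\]
where $R_j$ gathers the symmetric corrections $-\tfrac12\int_\Omega\partial_t\ep^\D(E)|\partial_t^j E|^2$, $-\tfrac12\int_\Omega\partial_t\mu^\D(H)|\partial_t^j H|^2$ and the commutators $-\int_\Omega(\tilde f_j\cdot\partial_t^j E+\tilde g_j\cdot\partial_t^j H)$ (for $k=0$, the cubic remainder arising when $\int_\Omega\ep^\D(E)\partial_t E\cdot E$ is compared with $\tfrac{\D}{\D t}\tfrac12\|\ep(E)^{1/2}E\|_{L^2}^2$). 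I would estimate $|R_j|\le c\,z^{3/2}$ via \eqref{est:z}: for the corrections, $\|\partial_t\ep^\D(E)\|_\infty\le c\|\partial_t E\|_{\cH^2}\le cz^{1/2}$ multiplied by $\|\partial_t^j E\|_{L^2}^2\le z$; for the commutators, $\|\tilde f_j\|_{L^2}\le\|\tilde f_j\|_{\cH^{3-j}}\le cz$ multiplied by $\|\partial_t^j E\|_{L^2}\le z^{1/2}$, and likewise for $H$.

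Integrating this identity over $[s,t]$ gives, for every $0\le j\le k$,
\[
\tfrac12\big(\|\wh\ep_k^{1/2}\partial_t^j E(t)\|_{L^2}^2+\|\wh\mu_k^{1/2}\partial_t^j H(t)\|_{L^2}^2\big)
 +\int_s^t\|\sigma^{1/2}\partial_t^j E\|_{L^2}^2\dd\tau
 \le e_k(s)+c\int_s^t z^{3/2}\dd\tau,
\]
since the value at $\tau=s$ is one of the terms in the maximum defining $e_k(s)$. Passing to $e_k,d_k$ by taking the maximum over $0\le j\le k$ and using nonnegativity of the dissipation integrals then produces \eqref{est:energy}; the constant $c_1$ depends only on the bounds in \eqref{est:z} and on the embedding $\cH^2\hra L^\infty$, not on $s,t,T_*$. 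Simple connectedness of $\Omega$ is never used here, consistent with the hypotheses.

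\emph{Main obstacle.} I expect the delicate point to be neither the integration by parts nor the boundary term, but the uniform, interval-independent control of the remainders $R_j$ at the cubic level $z^{3/2}$. This rests on two structural inputs that must be invoked carefully: the symmetry $\ep^\D=(\ep^\D)^\top$, $\mu^\D=(\mu^\D)^\top$, without which $\int_\Omega\ep^\D\partial_t u\cdot u$ would not reduce to a time derivative plus a lower-order term; and the commutator estimates \eqref{est:z}, which encode how the $\cH^3$-smallness \eqref{est:delta} converts products of field derivatives into powers of $z$. A minor additional bookkeeping point is the $\ep(E)$-versus-$\ep^\D(E)$ mismatch in the $k=0$ energy, which again only costs a cubic correction.
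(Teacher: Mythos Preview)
Your strategy coincides with the paper's: test the time-differentiated system \eqref{eq:maxwell2} against $(\partial_t^j E,\partial_t^j H)$, exploit the symmetry of $\ep^\D,\mu^\D$ to produce an exact time derivative, cancel the $\curl$ terms via the boundary condition $\tr_t\partial_t^j E=0$, and bound the cubic remainders by $z^{3/2}$ through \eqref{est:z}. For $j\le 2$ this is rigorous exactly as you write it, since $\partial_t^j(E,H)\in C^1(J_*,L^2)\cap C(J_*,\cH^1)$ by \eqref{local}, so both the differentiation of the quadratic form and the $\curl$ integration by parts are justified.

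The gap is at the top order $j=3$. There $\partial_t^3(E,H)$ lies only in $C(J_*,L^2)$; neither $\partial_t^4 E$ nor $\curl\partial_t^3 H$ is in $L^2(\Omega)$, so the ``pairing in $L^2(\Omega)$'' you invoke is undefined, the identity $\int_\Omega\ep^\D(E)\,\partial_t u\cdot u=\tfrac{\D}{\D t}\tfrac12\|(\ep^\D)^{1/2}u\|_{L^2}^2-\tfrac12\int_\Omega\partial_t\ep^\D(E)\,u\cdot u$ cannot be obtained by direct differentiation, and the $\curl$ integration by parts is not available. The paper isolates precisely this point in a separate Lemma~\ref{lem:energy}, which establishes the energy identity \eqref{eq:energy} for merely $C(J,L^2)$--solutions of a linear non-autonomous system with $W^{1,\infty}$ coefficients. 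Its proof is not a one-liner: one regularizes both the data (in $L^2$) and the coefficients $a=\ep^\D(E)$, $b=\mu^\D(H)$ (by smooth $a_m,b_m$), obtains $\cH^1$--solutions from the local theory of \cite{Sp0,Sp1}, performs your formal computation at that regularity level, and passes to the limit using the $C(J,L^2)$ and trace estimates of \cite{El12}. Once this lemma is in hand, applying it with $(u,v)=(\partial_t^j E,\partial_t^j H)$, $(\ph,\psi)=(\tilde f_j,\tilde g_j)$ and $\chi=0$ yields your pointwise-in-time identity, and the remaining $z^{3/2}$ bookkeeping in your proposal is correct.
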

We first give the short proof for the case $k=0$. Since our 
solutions $(E,H)$ of \eqref{eq:maxwell0} are regular, see \eqref{local}, integration by parts and \eqref{eq:maxwell0} easily yield
\begin{align*}
 \frac{\D}{\D t}\,\frac12& \!\int_\Omega \big(\ep(E(t))E(t)\cdot E(t)+ \mu(H(t)) H(t)\cdot H(t)\big) \dd x \\
 &=\frac12\int_\Omega \big(  \partial_t(\ep(E)E) \cdot E + \ep(E) E \cdot (\ep(E)^{-1} \partial_t(\ep(E)E)) +\ep(E)  E \cdot (\partial_t \ep(E)^{-1} \, \ep(E)E))\\
    &\qquad +  \partial_t(\mu(H)H) \cdot H + \mu(H) H \cdot (\mu(H)^{-1} \partial_t(\mu(H)H)) + \mu(H) H \cdot (\partial_t \mu(H)^{-1} \, \mu(H)H))\big)\dd x \\
    &= \int_\Omega \big( \curl H \cdot E -\sigma E\cdot E- \curl E \cdot H  -  \tfrac12 \partial_t \ep(E) \,  E\cdot E
      -  \tfrac12 \partial_t \mu(H) \,H \cdot H\big)\dd x\\
    &= -\int_\Omega \big( \sigma E\cdot E+ \tfrac12\partial_t \ep(E) \,  E\cdot E +   \tfrac12\partial_t \mu(H) \,H \cdot H\big)\dd x.
 \end{align*}
We thus obtain the energy inequality
 \beq\label{eq:energy0}
 e_0(t) + \int_s^ t d_0(\tau)\dd \tau
   = e_0(t) -\frac12 \int_{(s,t)\times \Omega} \big(\partial_t \ep(E) \,  E\cdot E +   \partial_t \mu(H) \,H \cdot H\big)\dd (\tau, x).
 \eeq
Combined with estimate \eqref{est:z}, we derive \eqref{est:energy} for the case $k=0$.

For $k\in \{1,2,3\}$ in Proposition~\ref{prop:energy}, we have  different coefficients in the energy $e_k$ defined in \eqref{def:dez}.
In this case, \eqref{est:energy} follows from  Lemma~\ref{lem:energy} below, the system \eqref{eq:maxwell2} and the estimates \eqref{est:z}. 
This lemma provides an energy identity in a more general situation to be encountered later. 

For some $T > 0$, let the coefficients $a, b\in W^{1,\infty}(\Omega_T, \RR^{3+3}_{\sym})$ satisfy $a, b \ge \eta I$.
Take data $\ph, \psi \in L^2(\Omega_T)^3$, $\chi\in L^2(J, \cH^{1/2}(\Gamma))^3$ with $\nu\cdot\chi =0$, and $u_0, v_0 \in L^2(\Omega)^3$.
Theorem~1.4 of \cite{El12} yields a solution 
$(u,v) \in C(J, L^2(\Omega))^6$ with $\tr_t v \in  L^2(J, \cH^{-1/2}(\Gamma))^3$ to the linear  system
\begin{equation} 
    \label{eq:aux}\begin{split}
    a \partial_t u&= \curl v -\sigma u -  \ph, \qquad t\in J, \ x\in \Omega,  \\
    b \partial_t v&= -\curl u - \psi, \qquad t\in J, \ x\in \Omega,\\
    \tr_t u &=\chi,  \qquad t\in J, \ x\in \Gamma. \\
    u(0)&=u_0, \quad v(0)=v_0.
    \end{split}
\end{equation}
(As noted before \eqref{eq:maxwell2}, the tangential trace of $u$ exists in  $L^2(J, \cH^{-1/2}(\Gamma))$.)
 

\begin{lem} \label{lem:energy}
Under the assumptions above, for $0 \le s \le t\le T$ we have
\begin{align}
    \label{eq:energy}
    \begin{split}
    \frac12\int_\Omega \big(&a(t)u(t)\cdot u(t)+ b(t) v(t)\cdot v(t)\big) \D x 
     + \int_s^t\int_\Omega \sigma u \cdot u\dd x\dd\tau \\
    &= \frac12\int_\Omega \big(a(0)u_0\cdot u_0+ b(0) v_0\cdot v_0 \big) \D x
     + \int_s^t \int_\Gamma \chi \cdot \tr_\tau v \dd x\dd\tau \\
    &\quad  + \int_s^t \int_\Omega \big( \tfrac12 \partial_t a\,  u\cdot u +  \tfrac12\partial_tb\, v\cdot v
            + \ph\cdot u + \psi\cdot v  \big) \D x\dd\tau.  
    \end{split}
\end{align}
\end{lem}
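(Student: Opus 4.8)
The plan is to obtain \eqref{eq:energy} first by a formal energy computation, pretending that $(u,v)$ is a smooth solution of \eqref{eq:aux}, and then to remove the regularity restriction by approximation. For the formal step I would pair the first equation in \eqref{eq:aux} with $u$ and the second with $v$ in $L^2(\Omega)$ and add. Since $a$ and $b$ are symmetric, one has the pointwise identities $a\,\partial_t u\cdot u=\tfrac12\partial_t(a\,u\cdot u)-\tfrac12\partial_t a\,u\cdot u$ and $b\,\partial_t v\cdot v=\tfrac12\partial_t(b\,v\cdot v)-\tfrac12\partial_t b\,v\cdot v$, which produce the time derivative of the energy density together with the volume terms $\tfrac12\partial_t a\,u\cdot u$ and $\tfrac12\partial_t b\,v\cdot v$ in \eqref{eq:energy}. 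The conductivity and the forcing give rise to the remaining volume integrals $\int_\Omega\sigma u\cdot u$ and those involving $\ph$ and $\psi$ on the right-hand side of \eqref{eq:energy}.

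The two $\curl$ terms combine through the integration-by-parts formula $\int_\Omega(\curl v\cdot u-\curl u\cdot v)\dd x=\int_\Gamma(v\times u)\cdot\nu\dd x$, which follows from $\Div(v\times u)=\curl v\cdot u-\curl u\cdot v$ and the divergence theorem. Using the scalar triple product $(v\times u)\cdot\nu=v\cdot(u\times\nu)$, the boundary condition $\tr_t u=u\times\nu=\chi$, and the tangentiality $\nu\cdot\chi=0$ (so that $v\cdot\chi=\tr_\tau v\cdot\chi$, only the tangential part of $v$ contributing), this boundary term equals $\int_\Gamma\chi\cdot\tr_\tau v\dd x$, exactly as in \eqref{eq:energy}. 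Collecting all contributions yields a differential energy identity in $t$; integrating it over the interval $(s,t)$ and inserting the initial/endpoint values then produces \eqref{eq:energy}.

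The substantive point, and the step I expect to be the main obstacle, is the rigorous justification of this computation for the \emph{weak} solution supplied by Theorem~1.4 of \cite{El12}, which lies only in $C(J,L^2(\Omega))^6$ with $\tr_t v\in L^2(J,\cH^{-1/2}(\Gamma))$. For such $(u,v)$ the time derivatives $\partial_t u,\partial_t v$ are of low regularity (from \eqref{eq:aux}, $\partial_t u$ involves $\curl v$, which is merely distributional), so the pairing $\langle a\,\partial_t u,u\rangle$ and the pointwise differentiation in $t$ are not classically meaningful, and the boundary integral must be interpreted as the $\cH^{1/2}(\Gamma)$–$\cH^{-1/2}(\Gamma)$ duality between $\chi$ and $\tr_\tau v$. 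To handle this I would approximate the data $a,b,\sigma,\ph,\psi,\chi,u_0,v_0$ by smooth data, respecting the symmetry, the bounds $a,b\ge\eta I$, and the constraint $\nu\cdot\chi=0$; for the resulting smooth solutions the computation of the first two paragraphs is fully rigorous. Passing to the limit, using the continuous dependence of the solution map from \cite{El12} together with the continuity of the tangential traces recalled in Section~\ref{SECTION_SETTINGS_AND_RESULTS}, then transfers the identity \eqref{eq:energy} to the stated generality.
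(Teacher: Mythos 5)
Your formal computation is exactly the one the paper performs (see \eqref{eq:energy1}): pairing the two equations with $u$ and $v$, using the symmetry of $a,b$ to produce $\tfrac12\partial_t(a\,u\cdot u)-\tfrac12\partial_t a\,u\cdot u$, and combining the curl terms via $\Div(v\times u)=\curl v\cdot u-\curl u\cdot v$ to obtain the boundary term $\int_\Gamma\chi\cdot\tr_\tau v\,\dd x$. You also correctly identify the real difficulty, namely that the solution from Theorem~1.4 of \cite{El12} is only $C(J,L^2)$ with a distributional tangential trace, so the identity must be obtained by regularization.

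The gap is in the limit passage. You propose to approximate the coefficients $a,b$ \emph{and} the data $u_0,v_0,\ph,\psi,\chi$ simultaneously and to conclude by ``continuous dependence of the solution map from \cite{El12}.'' But the $L^2$-based continuous dependence estimate of \cite{El12} controls perturbations of the \emph{data} for \emph{fixed} coefficients; it gives no stability of the solution under perturbation of $a$ and $b$. To handle the coefficient perturbation one needs a compactness argument: the paper invokes Corollary~3.12 of \cite{Sp0} to get bounds in $G^1=C^1(J,L^2)\cap C(J,\cH^1)$ that are uniform in the coefficient index $m$ but depend on the $\cH^1$-norms of the (regularized) data --- norms that blow up as the data approximation is refined. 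This is precisely why the paper insists that ``one has to approximate the data and the coefficients separately'': first fix regularized data at level $n$, pass $m\to\infty$ in the coefficients using the uniform $G^1$ bounds and weak-$\ast$ compactness to obtain an $\cH^1$-in-space solution $(u_n,v_n)$ for the true coefficients $a,b$, carry out the energy identity for $(u_n,v_n)$, and only then send $n\to\infty$ using \cite{El12}. Your single-step limit, as stated, is not justified. Two smaller omissions: to obtain the regular approximate solutions at all you need the local well-posedness result Theorem~1.1 of \cite{Sp1}, whose compatibility condition $\tr_t u_{0,n}=\chi_n(0)$ must be arranged in the approximation (the paper takes $\chi_n$ vanishing at $t=0$ and $u_{0,n}$ a test function); and the identity \eqref{eq:energy} evaluates the energy at time $t$ against the data at time $0$ while integrating the dissipation only over $(s,t)$, so the regularized identity should be integrated and matched accordingly before passing to the limit.
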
 

\begin{proof}
For $\cH^1$--solutions $(u,v)$, the  claim easily  follows from the system \eqref{eq:aux} and integration by parts, see step~3) below.
We thus have to regularize the given data and coefficients to obtain  $\cH^1$--solutions for these regularized problems. Afterwards 
one passes to the limit in the resulting variant of \eqref{eq:energy}. In view of the available a priori estimates and regularity results
from \cite{El12}, \cite{Sp0} or  \cite{Sp1}, one has to approximate the data and the coefficients separately. The assertion is closely 
related to \cite{El12}, but not stated there. Since the reasoning is somewhat involved, we give a (partly sketchy) proof.

\smallskip 

1) We approximate the initial data $u_0$ and $v_0$ and the forcing terms $\ph$ and  $\psi$ in $L^2$ by test functions $u_{0,n}$, 
$v_{0,n}$, $\ph_n$ and $\psi_n$, respectively. The boundary inhomogeneity $\chi$ is approximated in  $L^2(J, \cH^{1/2}(\Gamma))$
by mappings  $\chi_n\in \cH^1(J, \cH^{3/2}(\Gamma))$ which vanish at $t=0$. Moreover, we take coefficients 
$a_m, b_m \in C^3(J\times \ol{\Omega},\RR^{3\times3}_{\sym})$ 
which are uniformly positive definite and uniformly bounded in $W^ {1,\infty}(J\times \ol{\Omega},\RR^{3\times3}_{\sym})$, that converge to $a$ and $b$ uniformly,
and whose derivatives tend pointwise a.e.\ to $\nabla_{t,x} a$ and  $\nabla_{t,x} b$, respectively, as $m\to\infty$.

\smallskip 

2) Theorem~1.1 of \cite{Sp1} yields functions $(u_{n,m},v_{n,m})$ in $G^1:=C^1(J,L^2(\Omega))\cap C(J, \cH^1(\Omega))$ which solve the problem 
\eqref{eq:aux} with the coefficients and the data from step~1).  We note that the required compatibility condition $\tr_t u_{0,n} =\chi_n(0)$
is trivially satisfied. The a priori estimates in this theorem  are \emph{not} uniform in $m$ or $n$. However, Corollary~3.12 of \cite{Sp0}
allows us to dominate $(u_{n,m},v_{n,m})$ in $G^1$ by constants depending on the (uniformly bounded) $W^{1,\infty}$--norms of $a_m$ and $b_m$
as well as on the norms of  $u_{0,n}$,  $v_{0,n}$, $\ph_n$ and $\psi_n$ in $\cH^1$ and of  $\chi_n$ in 
$L^2(J, \cH^{3/2}(\Gamma))\cap \cH^1(J, \cH^{1/2}(\Gamma))$. 
(Note that these norms of the data may blow up as $n\to\infty$.)
This corollary actually deals with the localized problem on the half-space $\RR^3_+$, but it can be transfered to our system \eqref{eq:aux}
on $\Omega$ in a standard way, cf.\ Chapter~5 of \cite{Sp0} or Section~2 of \cite{Sp1}. 
Moreover, Theorem~1.4 of \cite{El12} shows a uniform estimate of the norms of the solutions in $C(J,L^2(\Omega))$ and of their
tangential traces in $L^2(J, \cH^{-1/2}(\Gamma))$ by the norms of the data in $L^2$ or the boundary forcing in $L^2(J, \cH^{1/2}(\Gamma))$.

We first keep $n\in \NN$ fixed. The aforementioned results  from  \cite{Sp0} and \cite{El12} 
imply that a subsequence of  $(u_{n,m},v_{n,m})_m$ has a weak-$\ast$ 
accumulation point $(u_{n},v_{n})$ in $W^{1,\infty}(J,L^2(\Omega))\cap L^ \infty(J, \cH^1(\Omega))$, that  $(u_{n,m},v_{n,m})_m$
converges to $(u_n,v_n)$ in $C(J,L^2(\Omega))$ and  that $(\tr_\tau u_{n,m},\tr_\tau v_{n,m})_m$ tends to $(\tr_\tau u_n, \tr_\tau v_n)$
 in  $L^2(J, \cH^{-1/2}(\Gamma))$.
It is then routine to check that the functions $(u_n,v_n)$ solve  \eqref{eq:aux}
with the coefficients $a$ and $b$ and for the data  $u_{0,n}$, $v_{0,n}$, $\ph_n$, $\psi_n$, and $\chi_n$. 

\smallskip

3)   Using the system \eqref{eq:aux} and integrating by parts, we  calculate
  \begin{align}\label{eq:energy1}
        \frac{\D}{\D t} \; &\frac12\!\int_\Omega \big(a(t)u_n(t)\cdot u_n(t)+ b(t) v_n(t)\cdot v_n(t)\big) \dd x \\
        &= \int_\Omega \big( \tfrac12 \partial_t a\,  u_n\cdot u_n + \tfrac12\partial_t b \, v_n\cdot v_n
        +  a \partial_t u_n\cdot u_n + b  \partial_t v_n\cdot v_n\big)\dd x \notag \\
        &= \int_\Omega \big( \tfrac12 \partial_t a\,  u_n\cdot u_n + \tfrac12\partial_t b\,  v_n\cdot v_n
        + (\curl v_n -\sigma u_n+\ph_n )\cdot u_n +(-\curl u_n +\psi_n)\cdot v_n \big)\dd x\notag \\
        &=  \int_\Omega \big( \tfrac12 \partial_t a\,  u_n\cdot u_n  +  \tfrac12\partial_t b\,v_n\cdot v_n
            + \ph_n\cdot u_n + \psi_n\cdot v_n  -\sigma u_n\cdot u_n\big) \dd x +  \int_\Gamma \chi_n \cdot \tr_\tau v_n \dd x. \notag 
    \end{align}
 
4) The estimate in Theorem~1.4 of \cite{El12} indicated above now implies the convergence of $((u_{n},v_{n}))_n$ to $(u,v)$ in $C(J,L^2(\Omega))$
and of  $((\tr_\tau u_{n},\tr_\tau v_{n}))_n$ to $(\tr_\tau u, \tr_\tau v)$ in  $L^2(J, \cH^{-1/2}(\Gamma))$. Here $(u,v)$ is the solution
to \eqref{eq:aux} provided by  Theorem~1.4 of \cite{El12}. After integrating the identity \eqref{eq:energy1} in  time, we can finally pass to the limit 
 $n\to\infty$ obtaining \eqref{eq:energy}.
 \end{proof}

We now assume that $\Omega$ is simply connected in order to derive our observability-type estimate. 
Following \cite{El} or \cite{phung} in the linear autonomous case, we use Helmholtz decompositions of the fields $(E(t), H(t))$ 
and the spaces
\begin{align*}
    \cH(\tcurl \, 0)&=\{u\in L^2(\Omega)^3\,|\, \curl u =0\}, \qquad
    \hspace*{-0.43cm} \cH_0(\tcurl \, 0)=\{u\in \cH(\tcurl \, 0)\,| \tr_t u =0\},\\
    \cH(\tdiv \, 0)&=\{u\in L^2(\Omega)^3\,|\, \Div u =0\},\qquad
    \!\!\! \cH_0(\tdiv \, 0)=\{u\in \cH(\tdiv \, 0)\,| \tr_n u =0\},\\
    \cH^\Gamma(\tdiv \, 0)&=\big\{u\in \cH(\tdiv \, 0)\,\big|\, \textstyle{\int_{\Gamma_j}} \tr_n u\dd x=0 
         \text{ \ \ for all components $\Gamma_j$ of $\Gamma$}\big\},\\
    \cH^1_{t0}(\Omega)&= \{u\in \cH^1(\Omega)^3\,|\, \tr_t u =0\}=\{u\in \cH(\tdiv)\cap \cH(\tcurl)\,|\,\tr_t u=0\},
\end{align*}
The last identity is shown in Theorem~XI.1.3 of \cite{DL}. The first five spaces are endowed with the $L^2$--norm,
while  $\cH^1_{t0}(\Omega)$ and its subspace $\cH(\tdiv \, 0)\cap \cH_0(\tcurl \, 0)$ are equipped with that of  $\cH^1$. 
We next establish the Helmholtz decomposition needed in the sequel. Our result is a variant of 
Proposition~2 in \cite{El}, where the case of time-independent $\ep$ and $\mu$ and less regular solutions
was treated.

\begin{lem}
    \label{lem:helmholtz}
    Let the assumptions of Theorem~\ref{thm:main} be satisfied. We take the fields  $(E,H)$ from \eqref{local} solving
    the Maxwell system \eqref{eq:maxwell0}. Then there exist functions $w$ in 
    $C^3\big(J_{\maxi}, \cH^ 1_{t0}(\Omega)^3\cap \cH^\Gamma(\tdiv \, 0)\big)\cap C^4\big( J_{\maxi}, L^2(\Omega)\big)^3$, 
    $p$ in $C^3(J_{\maxi}, \cH^1_0(\Omega))$ and $h$ in $C^3\big( J_{\maxi},  \cH(\tdiv \, 0)\cap \cH_0(\tcurl \, 0)\big)$ such that
    \begin{align} 
        \partial_t^k E &= -\partial_t^ {k+1} w + \nabla  \partial_t^k p +  \partial_t^k h, \label{eq:helmholtz1}\\
        \wh\mu_k\partial^k_t H &= \curl \partial_t^ k w -g_k \label{eq:helmholtz2}
    \end{align}
    for $k\in\{0,1,2,3\}$, cf.\ \eqref{def:ep-mu-hat} and \eqref{def:fk}, where the sum in \eqref{eq:helmholtz1} is orthogonal in 
    $L^2(\Omega)^3$.
\end{lem}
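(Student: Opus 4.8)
The plan is to construct the magnetic vector potential $w$ first, then read off the electric decomposition from Faraday's law, and finally treat the highest time derivative separately. I work with the $L^2$-orthogonal splitting $L^2(\Omega)^3=\mathcal{V}\oplus\nabla\cH^1_0(\Omega)\oplus\big(\cH(\tdiv\,0)\cap\cH_0(\tcurl\,0)\big)$, where $\mathcal{V}$ is the orthogonal complement of the other two summands; the harmonic space $\cH(\tdiv\,0)\cap\cH_0(\tcurl\,0)$ is finite-dimensional (its dimension being the number of components of $\Gamma$ minus one, which is why $h$ need not vanish even though $\Omega$ is simply connected). Gradients of $\cH^1_0$-functions are orthogonal to the other two summands by integration by parts, and divergence-free fields with vanishing flux $\int_{\Gamma_j}\tr_n=0$ over each boundary component—such as those in $\cH^1_{t0}(\Omega)\cap\cH^\Gamma(\tdiv\,0)$—are orthogonal to the harmonic fields by the same computation. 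The second ingredient is a vector-potential result on the simply connected $\Omega$ (of the $\tcurl$-$\tdiv$ type developed in Section~\ref{SECTION_AUXILIARY_RESULTS}): for $b\in\cH_0(\tdiv\,0)$ there is a unique $Lb\in\cH^1_{t0}(\Omega)\cap\cH^\Gamma(\tdiv\,0)$ with $\curl(Lb)=b$, the operator $L$ is linear and bounded with the elliptic gain $L\colon\cH^m\cap\cH_0(\tdiv\,0)\to\cH^{m+1}$, and it satisfies the identity $L\,\curl=P_{\mathcal{V}}$ on $\cH^1_{t0}(\Omega)$.

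By the divergence relation \eqref{eq:div0} and the magnetic boundary condition in \eqref{eq:maxwell0}, the field $\mu(H(t))H(t)$ lies in $\cH_0(\tdiv\,0)$, so I set $w=L(\mu(H)H)$. From \eqref{local} and the product/composition estimates underlying \eqref{est:z} one gets $\mu(H)H\in\bigcap_{k=0}^3 C^k(J_{\maxi},\cH^{3-k}(\Omega))$, whence the elliptic gain of $L$ yields $w\in\bigcap_{k=0}^3 C^k(J_{\maxi},\cH^{4-k})$ and in particular $w\in C^3\big(J_{\maxi},\cH^1_{t0}(\Omega)\cap\cH^\Gamma(\tdiv\,0)\big)$. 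Differentiating $\curl w=\mu(H)H$ in time and matching terms with the definitions of $\mu^\D$ and $g_k$ in \eqref{def:ep-mu-hat}, \eqref{def:fk}, \eqref{eq:div} gives $\curl\partial_t^k w=\partial_t^k(\mu(H)H)=\wh\mu_k\partial_t^k H+g_k$, which is \eqref{eq:helmholtz2}.

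For the electric field, Faraday's law $\partial_t(\mu(H)H)=-\curl E$ together with $\curl w=\mu(H)H$ gives $\curl(\partial_t^{k+1}w+\partial_t^k E)=0$, while $\tr_t(\partial_t^{k+1}w+\partial_t^k E)=0$ by \eqref{eq:maxwell1} and $w\in\cH^1_{t0}$; hence $\partial_t^{k+1}w+\partial_t^k E$ lies in $\cH_0(\tcurl\,0)=\nabla\cH^1_0(\Omega)\oplus\big(\cH(\tdiv\,0)\cap\cH_0(\tcurl\,0)\big)$. I then define $p(t)\in\cH^1_0(\Omega)$ by $\Delta p=\Div E$ with zero boundary values and $h(t)$ as the orthogonal projection of $E(t)$ onto the harmonic fields. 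Since $\partial_t w\in\cH^1_{t0}(\Omega)\cap\cH^\Gamma(\tdiv\,0)\subset\mathcal{V}$ is orthogonal to both $\nabla\cH^1_0$ and the harmonic fields, one checks $E+\partial_t w=\nabla p+h$, and time differentiation yields the orthogonal decomposition \eqref{eq:helmholtz1} for $k\in\{0,1,2\}$. The regularity $p\in C^3(J_{\maxi},\cH^1_0(\Omega))$ follows from $E\in C^3(J_{\maxi},L^2)$ and the boundedness of $\Delta^{-1}$ from $\cH^{-1}$ to $\cH^1_0$, while $h\in C^3\big(J_{\maxi},\cH(\tdiv\,0)\cap\cH_0(\tcurl\,0)\big)$ follows from finite-dimensionality and the smoothness of a fixed basis.

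The main obstacle is the claim $w\in C^4(J_{\maxi},L^2)$, which is needed to make sense of \eqref{eq:helmholtz1} at $k=3$: differentiating $w=L(\mu(H)H)$ a fourth time would require $\partial_t^4 H$, which \eqref{local} does not provide. The remedy is to trade the missing time derivative for a spatial one through Faraday's law. Since $\partial_t^3(\mu(H)H)=-\curl\partial_t^2 E$ with $\partial_t^2 E\in\cH^1_{t0}$, the identity $L\,\curl=P_{\mathcal{V}}$ gives $\partial_t^3 w=-P_{\mathcal{V}}\partial_t^2 E$. Because $\partial_t^2 E\in C^1(J_{\maxi},L^2)$ with derivative $\partial_t^3 E\in C^0(J_{\maxi},L^2)$ and $P_{\mathcal{V}}$ is a bounded, time-independent projection, $\partial_t^3 w$ is differentiable into $L^2$ with $\partial_t^4 w=-P_{\mathcal{V}}\partial_t^3 E\in C^0(J_{\maxi},L^2)$. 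This simultaneously furnishes \eqref{eq:helmholtz1} for $k=3$, and the mutual orthogonality of the ranges of $P_{\mathcal{V}}$, of $\nabla$ on $\cH^1_0$, and of the harmonic projection gives the asserted $L^2$-orthogonality of the three terms.
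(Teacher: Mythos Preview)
Your proof is correct and follows essentially the same route as the paper: construct $w$ as the vector potential of $\mu(H)H\in\cH_0(\tdiv\,0)$ via the simply-connected potential theorem, differentiate to obtain \eqref{eq:helmholtz2}, use Faraday's law to place $E+\partial_t w$ in $\cH_0(\tcurl\,0)$, and then read off $p$ and $h$ from the orthogonal Helmholtz splitting $L^2=\cH^\Gamma(\tdiv\,0)\oplus\nabla\cH^1_0\oplus\big(\cH(\tdiv\,0)\cap\cH_0(\tcurl\,0)\big)$. Your space $\mathcal{V}$ coincides with $\cH^\Gamma(\tdiv\,0)$, and your identity $L\,\curl=P_{\mathcal{V}}$ on $\cH^1_{t0}$ is exactly what the paper uses implicitly. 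The only notable difference is that you spell out the $C^4(J_{\maxi},L^2)$ regularity of $w$ explicitly via $\partial_t^3 w=-P_{\mathcal{V}}\partial_t^2 E$ and the boundedness of the time-independent projection, whereas the paper covers this point with the single sentence ``This fact furnishes the remaining regularity assertions''; your unpacking of that step is welcome and correct.
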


\begin{proof}
Let $t\in J_{\maxi}$.    Equations \eqref{eq:maxwell0} and \eqref{eq:div0} imply that the function $\mu\big(H(t)\big) H(t)$ is contained in
    $\cH_0(\tdiv\,0)$. Because $\Omega$ is simply connected, Theorem~2.8 of \cite{Ce} then yields a vector field
    $w(t)$ in  $\cH^ 1_{t0}(\Omega)^3\cap \cH^\Gamma(\tdiv \, 0)$ satisfying
    \begin{equation}\label{w}
        \curl w(t) = \mu(H(t)) H(t).
    \end{equation}
    Moreover, the mapping $\curl\colon  \cH^ 1_{t0}(\Omega)^3\cap \cH^\Gamma(\tdiv \, 0)\to  \cH(\tdiv \, 0)$ is invertible on the 
    strength of Theorem~2.9 in \cite{Ce}. In view of \eqref{local},
   the map $w$ thus belongs to $C^3 ( J_{\maxi}, \cH^1_{t0}(\Omega)^ 3\cap \cH^\Gamma(\tdiv \, 0))$. 
    Differentiating equation \eqref{w} in $t$, we deduce
    \begin{equation*}
        \curl \partial_t^ k w= \partial_t^ k (\mu(H) H) = \mu^\D(H)\partial^k_t H +g_k 
    \end{equation*}
    for $k\in\{1,2,3\}$ which proves (\ref{eq:helmholtz2}).
    Comparing this relation for $k = 1$ with  \eqref{eq:maxwell0}, we infer  $\curl (E+\partial_t w)=0$.
    Morever, the sum $E+\partial_t w$ belongs to the kernel of $\tr_t$. Theorem~2.8 of \cite{Ce} 
    now provides functions $p(t)\in \cH^1_0(\Omega)$ and $h(t)\in \cH(\tdiv \, 0)\cap \cH_0(\tcurl \, 0)$ such that
    \begin{equation}
        \label{eq:e-decomp}
        E(t)= -\partial_t w(t) +\nabla p(t) + h(t)
    \end{equation}
    for $t\in J_{\max}$.
    The spaces $\cH^\Gamma(\tdiv \, 0)$, $\nabla  \cH^1_0(\Omega)$ and $\cH(\tdiv \, 0)\cap \cH_0(\tcurl \, 0)$ are orthogonal in 
    $L^2(\Omega)^3$ and span this space, see Theorem~2.10' of \cite{Ce}. This fact furnishes the remaining regularity assertions. 
    We can now differentiate the identity \eqref{eq:e-decomp} in time, proving \eqref{eq:helmholtz1}.
\end{proof}

The energy inequality in Proposition~\ref{prop:energy} allows us to control the time integral of energy of  the electric field $E$
by the initial data and a higher order term. However,  it is necessary to bound  the time integrals of the energy of both $E$ and $H$
to obtain the desired global existence of solutions along with corresponding decay rates. 
This will be achieved by means of the Helmholtz decomposition established in Lemma~\ref{lem:helmholtz}.
We now show a lower bound for the dissipation (up to correction terms) using the quantities introduced in \eqref{def:dez}. 

\begin{prop}\label{prop:lower}
    Let the conditions of Theorem~\ref{thm:main} be satisfied.
    For $0 \le s\le t < T_*$ and $k\in \{0,1,2,3\}$, we can estimate
    \begin{equation*}
        \int_s^t e_k(\tau)\dd\tau \le c_2 \int_s^ t d_k(\tau)\dd \tau +c_3 (e_k(t)+ e_k(s)) 
        + c_4  \int_s^ t z^{3/2}(\tau) \dd \tau,
    \end{equation*}
    where the constants $c_j$ do not depend on the times $s$ and $t$.
\end{prop}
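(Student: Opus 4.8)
The plan is to bound $\int_s^t e_k$ by separating electric and magnetic contributions. The electric part is immediate: since $\sigma\ge\eta I$ and $\wh\ep_k$ is bounded above by \eqref{est:z}, one has $\|\wh\ep_k^{1/2}\partial_t^j E\|_{L^2}^2\le c\,\|\partial_t^j E\|_{L^2}^2\le c\,d_k$ for every $j\le k$. It therefore remains to control the magnetic contributions $\int_s^t\|\wh\mu_k^{1/2}\partial_t^j H\|_{L^2}^2\dd\tau$ for $j\in\{0,\dots,k\}$, and this is where the Helmholtz decomposition of Lemma~\ref{lem:helmholtz} enters. Fix such a $j$. For $j\ge1$ we have $\wh\mu_k=\mu^\D(H)=\wh\mu_j$, so \eqref{eq:helmholtz2} gives $\wh\mu_k\partial_t^jH=\curl\partial_t^jw-g_j$ exactly; for $j=0$, $k\ge1$ one replaces $\mu^\D(H)$ by $\mu(H)=\wh\mu_0$ at the cost of the cubic term $\int_\Omega(\mu^\D-\mu)(H)H\cdot H$, which is $O(\|H\|_{L^\infty}\|H\|_{L^2}^2)=O(z^{3/2})$ by $\cH^2\hookrightarrow L^\infty$ and \eqref{est:delta}, hence absorbed into the last term of the claim. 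In either case I reduce, modulo $O(z^{3/2})$, to estimating $\int_s^t\int_\Omega\curl\partial_t^jw\cdot\partial_t^jH\dd x\dd\tau$.

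The core computation integrates by parts in space and then in time. Since $\tr_t\partial_t^jw=0$ (as $w\in\cH^1_{t0}$), the surface term vanishes and
\[
  \int_\Omega\curl\partial_t^jw\cdot\partial_t^jH\dd x=\int_\Omega\partial_t^jw\cdot\curl\partial_t^jH\dd x .
\]
I then substitute the first Maxwell equation in \eqref{eq:maxwell1} in the form $\curl\partial_t^jH=\partial_t(\wh\ep_j\partial_t^jE)+\sigma\partial_t^jE+\partial_tf_j$, and integrate over $[s,t]$. The term carrying $\partial_t(\wh\ep_j\partial_t^jE)$ is integrated by parts in time, producing the endpoint contribution $\big[\int_\Omega\partial_t^jw\cdot\wh\ep_j\partial_t^jE\dd x\big]_s^t$ together with $-\int_s^t\int_\Omega\partial_t^{j+1}w\cdot\wh\ep_j\partial_t^jE\dd x\dd\tau$. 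Using $\|\partial_t^jw\|_{\cH^1}\le c\|\curl\partial_t^jw\|_{L^2}\le c(\|\partial_t^jH\|_{L^2}+\|g_j\|_{L^2})$ from the invertibility of $\curl$ on $\cH^1_{t0}(\Omega)^3\cap\cH^\Gamma(\tdiv\,0)$ (Theorem~2.9 of \cite{Ce}), Young's inequality bounds the endpoint term by $c_3(e_k(s)+e_k(t))$ up to $O(z^{3/2})$.

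For the remaining volume integral I insert the orthogonal splitting $\partial_t^{j+1}w=-\partial_t^jE+\nabla\partial_t^jp+\partial_t^jh$ from \eqref{eq:helmholtz1}. The diagonal part yields $+\|\wh\ep_j^{1/2}\partial_t^jE\|_{L^2}^2\le c\,d_k$, while the $\nabla p$ and $h$ parts are controlled through $\|\nabla\partial_t^jp\|_{L^2},\|\partial_t^jh\|_{L^2}\le\|\partial_t^jE\|_{L^2}$, a direct consequence of the $L^2$-orthogonality of the decomposition, so they are $\le c\,d_k$ as well. The conductivity term $\int_s^t\int_\Omega\partial_t^jw\cdot\sigma\partial_t^jE$ is split by Young's inequality with a small weight $\theta>0$ into $\theta c\int_s^te_k\dd\tau+C_\theta\int_s^td_k\dd\tau$ plus $O(z^{3/2})$, and the $\partial_tf_j$ term is $O(z^{3/2})$ by \eqref{est:z} and \eqref{est:delta} (one small factor $\|\partial_t^jw\|_{L^2}\le c\,z^{1/2}$ against $\|\partial_tf_j\|_{L^2}\le c\,z$). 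Collecting these bounds, adding the electric parts, summing over the finitely many $j\le k$, and choosing $\theta$ small enough to absorb $\theta c\int_s^te_k$ into the left-hand side yields the asserted inequality.

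The main obstacle is the bookkeeping needed to verify that every correction term is genuinely of order $z^{3/2}$ rather than $z$ or $e_k$: this rests throughout on pairing a factor that is small by \eqref{est:delta} (carrying an extra power $z^{1/2}=O(\delta)$) with a quadratic energy-type quantity, using the commutator estimates \eqref{est:z} and the embedding $\cH^2\hookrightarrow L^\infty$. A second, technical point is the low regularity at $k=3$, where $\partial_t(\wh\ep_3\partial_t^3E)$ and $\curl\partial_t^3H$ exist only in $\cH^{-1}$; there the spatial and temporal integrations by parts above must be read as duality pairings of $\partial_t^3w\in\cH^1_{t0}(\Omega)$ against $\cH^{-1}$-elements, justified by density and the distributional interpretation of \eqref{eq:maxwell1} recorded after \eqref{eq:div}. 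Finally, one must track which coefficient ($\wh\mu_0$ versus $\wh\mu_k$) occurs for $j=0$, $k\ge1$, as indicated in the first step.
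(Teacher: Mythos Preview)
Your proof is correct and follows essentially the same route as the paper's: both use the Helmholtz decomposition \eqref{eq:helmholtz2} to rewrite the magnetic energy, integrate by parts in space (using $\tr_t\partial_t^jw=0$), substitute the first equation of \eqref{eq:maxwell1}, integrate by parts in time, and control $\partial_t^jw$ via the Poincar\'e-type bound $\|\partial_t^jw\|_{L^2}\le c\|\curl\partial_t^jw\|_{L^2}$ together with the orthogonality $\|\partial_t^{j+1}w\|_{L^2}\le\|\partial_t^jE\|_{L^2}$. The only notable differences are organizational: you loop explicitly over all $j\le k$ and absorb the small parameter $\theta$ directly into the final $\int_s^te_k$, whereas the paper works at the generic index and absorbs $\theta\int|\partial_t^kw|^2$ back into the intermediate quantity $\big|\int\curl\partial_t^kw\cdot\partial_t^kH\big|$ via the chain $\int|\partial_t^kw|^2\le c\int|\curl\partial_t^kw|^2\le c\big|\int\curl\partial_t^kw\cdot\partial_t^kH\big|+O(z^{3/2})$; your explicit treatment of the $\wh\mu_0$ versus $\wh\mu_k$ discrepancy for $j=0$, $k\ge1$ is in fact a point the paper glosses over.
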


\begin{proof} 
Let $k\in \{0,1,2,3\}$. To simplify, we take $s= 0$. Equality \eqref{eq:helmholtz2} yields the identity
    \begin{equation}
        \label{eq:h-w}
        \int_{\Omega_t}  \wh\mu_k\partial^k_t H \cdot \partial^k_t H \dd (x, \tau)
            = \int_{\Omega_t} \curl \partial_t^ k w \cdot \partial^k_t H \dd (x, \tau)
            - \int_{\Omega_t} g_k\cdot \partial^k_t H \dd (x, \tau),
    \end{equation}
    where  $\Omega_t = \Omega \times (0,t)$.    Using the regularity $\partial_t^k w\in C(J_{\maxi}, \cH^1_{t0}(\Omega))^3$
    established  in Lemma~\ref{lem:helmholtz},
    we integrate by parts and then invoke the first line of the system \eqref{eq:maxwell1}.  It follows 
    \begin{align}
        \label{eq:h-w1}
        \begin{split}
        \int_{\Omega_t}& \curl \partial_t^ k w \cdot \partial^k_t H \dd (x, \tau)
        = \langle  \partial_t^ k w, \curl \partial^k_t H\rangle_{L^2((0,t), \cH^{-1}(\Omega))} \\
        &= \langle  \partial_t^ k w, \partial_t (\wh\ep_k   \partial_t^ kE)\rangle_{L^2((0,t), \cH^{-1}(\Omega))} 
        + \int_{\Omega_t} \partial_t^ k w \cdot (\sigma \partial^k_t E +\partial_t f_k) \dd (x, \tau) \\
        &= \int_\Omega \partial_t^ k w(t, \cdot)\cdot \wh\ep_k(t, \cdot)   \partial_t^ kE(t, \cdot) \dd x 
        - \int_\Omega \partial_t^ k w(0, \cdot)\cdot \wh\ep_k(0, \cdot)   \partial_t^ kE(0, \cdot) \dd x \\
        &\qquad - \int_{\Omega_t} \partial_t^{k+1} w \cdot \wh\ep_k   \partial_t^ kE \dd (x, \tau)
        +\int_{\Omega_t} \partial_t^ k w \cdot (\sigma \partial^k_t E +\partial_t f_k) \dd (x, \tau).
        \end{split}
    \end{align}
    Since $\partial_t^ k w(t, \cdot)$ belongs to  $\cH^1_{t0}(\Omega)^3\cap \cH^\Gamma(\tdiv \, 0)$, Theorem~2.9 in \cite{Ce} yields 
    the Poincar\'{e}-type estimate $\|\partial_t^ k w(\tau)\|_{L^2} \le c \,\|\curl \partial_t^ k w(\tau)\|_{L^2}$.
    From formulas \eqref{eq:helmholtz2} and \eqref{est:z}, we then infer the bound
    \begin{equation}
        \label{est:w} 
    \|\partial_t^ k w(\tau)\|_{L^2(\Omega)} \le c \,\|\curl \partial_t^ k w(\tau)\|_{L^2(\Omega)} 
     \le c\, \|\hat{\mu}_k \partial_t^k H(\tau) + g_k(\tau)\|_{L_2(\Omega) } \le ce_k^{1/2}(\tau).
    \end{equation}
    The orthogonality in equation \eqref{eq:helmholtz1} implies
    \begin{equation*}
        \|\partial_t^{k+1} w(\tau)\|_{L^2(\Omega)} \le \|\partial_t^{k} E(\tau)\|_{L^2(\Omega)}.
    \end{equation*}
    For any $\theta>0$, these inequalities along with \eqref{eq:h-w1} and \eqref{est:z} lead to the estimate
    \begin{align}
        \label{eq:h-w2}
        \Big|\int_{\Omega_t} \curl \partial_t^ k w \cdot \partial^k_t H \dd (x,\tau)\Big|
        &\le c(e_k(t)+ e_k(0)) + c\!\int_{\Omega_t} |\partial^k_t E|^2\dd (x, \tau)
        +\theta \int_{\Omega_t} \!\!|\partial^k_t w|^2 \D (x, \tau) \notag\\
        &\qquad  + c_\theta\int_{\Omega_t} |\partial^k_t E|^2 \dd (x, \tau) +  c\int_0^ t z_k^{3/2}(\tau) \dd \tau.
    \end{align}    
    As in \eqref{est:w}, we further compute
    \begin{align*}
        \int_{\Omega_t} |\partial^k_t w|^2\dd (x, \tau)
        & \le c  \int_{\Omega_t} |\curl\partial^k_t w|^2\dd (x, \tau)
        \le c  \int_{\Omega_t}  \curl\partial^k_t w \cdot \wh\mu_k^ {-1}\curl\partial^k_t w \dd (x, \tau) \\
        &=  c  \int_{\Omega_t} \curl\partial^k_t w \cdot  (\partial^k_t H +\wh\mu_k^ {-1}g_k) \dd (x, \tau)\\
        &\le  c  \,\Big|\!\int_{\Omega_t} \curl\partial^k_t w \cdot  \partial^k_t H  \dd (x, \tau)\Big| + c\int_0^t z^{3/2}(\tau)\dd \tau.
    \end{align*}
    Fixing a small number $\theta>0$, the term with $|\partial_t^k w|^2$ in equation \eqref{eq:h-w2} can now be absorbed by the left-hand 
    side and by the integral of $z^{3/2}$.  Employing also the condition $\sigma\ge \eta I$,     we arrive at
    \begin{equation*}
        \Big|\int_{\Omega_t} \curl \partial_t^ k w \cdot \partial^k_t H \dd (x, \tau)\Big|
        \le c(e_k(t)+e_k(0)) + c\int_0^t  d_k(\tau)  \dd \tau  + c\int_0^ t z_k^{3/2}(\tau) \dd \tau.
    \end{equation*}
    Equation \eqref{eq:h-w}, the last  inequality, and the estimates \eqref{est:z} yield the claim.
    Note that the constants $c$ depend neither on $t$ nor on $s$.
\end{proof}

Combining the results of Propositions \ref{prop:energy} and \ref{prop:lower}, we arrive at the following
energy bound. 
\begin{cor}
    \label{observe}
    Under the conditions of Theorem \ref{thm:main}, we have the inequality 
    \begin{equation*}
        e_k(t) + \int_s^t e_k(s) ds \leq C_1 e_k(s) + C_2 \int_s^t z^{3/2}(\tau) \dd \tau
    \end{equation*}
 for $0\le s\le t< T_*$  and $k\in\{0,1,2,3\}$, where the constants $C_k$ do not depend on $t$ and $s$.
\end{cor}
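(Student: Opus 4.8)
The plan is to obtain Corollary~\ref{observe} by a purely algebraic combination of the energy estimate \eqref{est:energy} from Proposition~\ref{prop:energy} and the observability-type estimate from Proposition~\ref{prop:lower}; no further PDE analysis is needed. Throughout I fix $k\in\{0,1,2,3\}$ and $0\le s\le t<T_*$, read the integrand in the claim as $e_k(\tau)$, and use repeatedly that $e_k$ and $d_k$ are nonnegative.

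First I would draw two consequences from \eqref{est:energy}. Discarding the nonnegative dissipation integral gives the pointwise bound
\[
 e_k(t)\le e_k(s)+c_1\int_s^t z^{3/2}(\tau)\dd\tau ,
\]
whereas discarding instead the nonnegative term $e_k(t)$ yields the dissipation bound
\[
 \int_s^t d_k(\tau)\dd\tau\le e_k(s)+c_1\int_s^t z^{3/2}(\tau)\dd\tau .
\]

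Next I would insert both bounds into the inequality of Proposition~\ref{prop:lower}. Replacing $\int_s^t d_k$ and $e_k(t)$ on its right-hand side by the two estimates above collapses everything into a bound for $\int_s^t e_k$ that involves only $e_k(s)$ and $\int_s^t z^{3/2}$, namely
\[
 \int_s^t e_k(\tau)\dd\tau\le (c_2+2c_3)\,e_k(s)+(c_1c_2+c_1c_3+c_4)\int_s^t z^{3/2}(\tau)\dd\tau .
\]
Adding the pointwise bound for $e_k(t)$ from the first step then gives the assertion with $C_1=1+c_2+2c_3$ and $C_2=c_1(1+c_2+c_3)+c_4$; since $c_1,\dots,c_4$ are independent of $s$ and $t$, so are $C_1$ and $C_2$.

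There is no genuine obstacle here, only one point that needs a little care: the term $c_3\bigl(e_k(t)+e_k(s)\bigr)$ appearing on the right-hand side of Proposition~\ref{prop:lower}. Since $c_3$ is merely a fixed constant and need not be small, the contribution $c_3\,e_k(t)$ cannot be absorbed into the left-hand side of the target inequality. The remedy is precisely to control $e_k(t)$ through the energy estimate itself, as in the first step, which bounds it by $e_k(s)$ plus a higher-order remainder and thereby avoids any smallness assumption on the constants.
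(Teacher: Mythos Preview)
Your argument is correct and yields the corollary with explicit constants. It differs from the paper's proof only in the way the two basic inequalities are combined. The paper multiplies the observability estimate of Proposition~\ref{prop:lower} by a small parameter $\theta=\min\{c_2^{-1},(2c_3)^{-1}\}$ and adds it to the energy estimate \eqref{est:energy}; the choice of $\theta$ ensures that $\theta c_2\int_s^t d_k$ is absorbed by $\int_s^t d_k$ and that $\theta c_3\,e_k(t)$ is absorbed by $e_k(t)$ on the left. You instead substitute the energy estimate into the right-hand side of Proposition~\ref{prop:lower}, bounding both $\int_s^t d_k$ and the troublesome term $e_k(t)$ directly in terms of $e_k(s)$ and the cubic remainder, and then add one more copy of the energy bound to recover $e_k(t)$ on the left. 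Your route avoids introducing the auxiliary parameter $\theta$ altogether and is marginally more elementary; the paper's route is the standard ``scale and absorb'' device. Either way, the content is the same algebraic combination of Propositions~\ref{prop:energy} and \ref{prop:lower}.
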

\begin{proof}
 We multiply the inequality in  Proposition~\ref{prop:lower}  by 
$\theta := \min\{c_2^{-1},(2c_3)^ {-1}\}$ and add it to  \eqref{est:energy}  from Proposition~\ref{prop:energy}, obtaining 
\begin{equation*}
    e_k(t)+ 2 \theta \int_s^ t e_k(\tau)\dd \tau \le 3e_k(s) + 2(c_1 +\theta c_4) \int_s^ t z^{3/2}(\tau) \dd \tau.\qedhere
\end{equation*}
\end{proof}

Corollary~\ref{observe} bounds the full energy  (over the  time interval $(s, t)$) by the initial energy
and superlinear higher-order energies. The quasilinear character of the equation requires to involve  
higher topological levels (up to the third order). To control these higher order terms, we need to closely investigate higher
regularity of solutions. While such an analysis has been developed in \cite{yamamoto} at the local level for the linear stationary problem,
our task is to globally extend the estimates by exploiting higher-order decay rates of the energy. 
To this end, both observability and regularity theories need to be developed -- a formidable task on its own and of independent interest.

\section{Higher-order energy observability and proof of Theorem~\ref{thm:main}}
\label{SECTION_HIGHER_ENERGY_OBSERVABILITY}

The central aim of this section is to strengthen the inequality in Corollary~\ref{observe} by including higher-order space derivatives 
represented by the terms $z(t)$ and $\int_s^t z(\tau)\dd \tau $ on the left-hand side of the estimate.
Such inequalities are often referred to as ``higher energy observability estimates'' and are used to derive decay rates of energies. 
To be more specific, the following estimate is the key step in the proof of the main result.

\begin{prop}\label{prop:z}
    Suppose the conditions of Theorem~\ref{thm:main} are satisfied.
    Then there exists a radius $\delta \in (0,\delta_0]$ such that for all 
    radii $r \in \big(0, r(\delta)\big]$ from equation \eqref{est:data}, the solutions $(E,H)$ satisfy 
    \begin{equation*}
        z(t) + \int_s^t z(\tau) \dd\tau \le C z(s)  
    \end{equation*}
for all $0\le s\le t<T_*$, where $z$ is defined in \eqref{def:dez} and the constant $C$ does not depend on time or $r\in (0,r(\delta))$,
but it depends on  $\delta$.
\end{prop}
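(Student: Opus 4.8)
The plan is to combine the energy--observability bound of Corollary~\ref{observe}, which controls only the $L^2$ norms of the time derivatives through the energies $e_k$, with the regularity result Proposition~\ref{prop:ze}, which recovers the missing spatial derivatives, and to absorb all quasilinear corrections using the smallness \eqref{est:delta}. This is precisely the ``barrier method'' combination announced in the introduction. Throughout I work on $J_* = [0,T_*)$, where \eqref{est:delta} gives $z(\tau)\le\delta^2\le1$, so that $z^{3/2}(\tau)=z^{1/2}(\tau)\,z(\tau)\le\delta\,z(\tau)$; this linearization of the superlinear terms by a small factor is the mechanism behind every absorption below. Since $z=z_3$ and $e=e_3$ already take the maximum over all time-derivative orders $j\le3$, it suffices to use Corollary~\ref{observe} with $k=3$.

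First I would record a two-sided comparison between $z$ and $e=e_3$ on $J_*$. Because $e_3$ involves only the weighted $L^2$ norms of $\partial_t^j(E,H)$ for $j\le3$, and the weights satisfy $\eta I\le\wh\ep_3,\wh\mu_3\le cI$ by \eqref{est:coeff-lower} and \eqref{est:z}, one trivially has $e(\tau)\le c\,z(\tau)$ and $e(\tau)\approx\max_{j\le3}(\|\partial_t^jE(\tau)\|_{L^2}^2+\|\partial_t^jH(\tau)\|_{L^2}^2)$. For the reverse inequality I invoke Proposition~\ref{prop:ze}: it bounds the spatial derivatives up to order $3-k$ of $\partial_t^k(E,H)$ by the $L^2$ norms of the time derivatives alone, up to a quasilinear remainder of order $z^{3/2}$. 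Taking the maximum over $k\in\{0,1,2,3\}$ gives $z(\tau)\le c_5\big(e(\tau)+z^{3/2}(\tau)\big)\le c_5\,e(\tau)+c_5\delta\,z(\tau)$. Fixing $\delta$ small enough that $c_5\delta\le\tfrac12$, the last term is absorbed, whence $z(\tau)\le c_6\,e(\tau)$ with $c_6=2c_5$. Thus $z$ and $e$ are equivalent on $J_*$, with constants independent of $t$, $T_*$ and $r$.

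It then remains to feed this equivalence into Corollary~\ref{observe} with $k=3$, which reads $e(t)+\int_s^t e(\tau)\dd\tau\le C_1 e(s)+C_2\int_s^t z^{3/2}(\tau)\dd\tau$. Using $z^{3/2}\le\delta z\le c_6\delta\,e$ on $J_*$, the last integral is bounded by $C_2 c_6\delta\int_s^t e(\tau)\dd\tau$. Shrinking $\delta$ further so that $C_2 c_6\delta\le\tfrac12$, this term is absorbed into the left-hand side, leaving $e(t)+\tfrac12\int_s^t e(\tau)\dd\tau\le C_1 e(s)$, i.e. $e(t)+\int_s^t e(\tau)\dd\tau\le2C_1 e(s)$. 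Converting back with $z\le c_6 e$ on the left and $e(s)\le c\,z(s)$ on the right yields $z(t)+\int_s^t z(\tau)\dd\tau\le C z(s)$ with $C=2C_1 c_6 c$, which is the claim; all constants are independent of $s$, $t$, $T_*$ and $r$, and depend at most on the fixed $\delta$.

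The genuinely hard work is entirely contained in Proposition~\ref{prop:ze}: recovering the full $\cH^3$ quantity $z$ from the purely temporal energy $e$ requires the $\tcurl$--$\tdiv$ estimates and the iteration over regularity levels described in the introduction, since the conductivity spoils the divergence relation for $E$ and the characteristic boundary obstructs control of the normal derivatives. Granting that proposition, the present argument is a routine but bookkeeping-sensitive absorption; the one point to watch is the order of the smallness choices — $\delta$ must be taken small enough to serve \emph{both} absorptions simultaneously (the condition $c_5\delta\le\tfrac12$ in the comparison step and $C_2 c_6\delta\le\tfrac12$ in the observability step) — and only afterwards is the final constant $C$ fixed.
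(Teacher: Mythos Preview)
Your argument has a genuine gap in the first step. You claim that Proposition~\ref{prop:ze} yields the \emph{pointwise} bound $z(\tau)\le c_5\big(e(\tau)+z^{3/2}(\tau)\big)$, but that is not what the proposition states. Its actual inequality is
\[
  z(t)+\int_s^t z(\tau)\dd\tau\le c_5\big(z(s)+e(t)+z^2(t)\big)+c_6\int_s^t\big(e(\tau)+z^{3/2}(\tau)\big)\dd\tau,
\]
which carries the initial-type term $z(s)$ and the time integrals on the right. Setting $s=t$ collapses to $z(t)\le c_5\big(z(t)+e(t)+z^2(t)\big)$, which is useless since $c_5\ge1$ in general; and for $s<t$ the integral terms do not disappear. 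So no pointwise comparison $z(\tau)\le c_6\,e(\tau)$ is available, and the conversion in your final step ``$z(t)+\int_s^t z\le c_6\big(e(t)+\int_s^t e\big)$'' is unjustified. (The informal sentence in the introduction that you paraphrase is a heuristic summary, not the precise content of Proposition~\ref{prop:ze}.)

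The fix is to reverse the order of combination, as the paper does: start from Proposition~\ref{prop:ze} and insert Corollary~\ref{observe} into its right-hand side to bound $e(t)$ and $\int_s^t e(\tau)\dd\tau$ by $C_1 e(s)+C_2\int_s^t z^{3/2}$, then use $e(s)\le c\,z(s)$. This leaves
\[
  z(t)+\int_s^t z(\tau)\dd\tau\le C'z(s)+c_5 z^2(t)+C''\int_s^t z^{3/2}(\tau)\dd\tau,
\]
and now both superlinear terms are absorbed by the left-hand side via $z^2(t)\le\delta^2 z(t)$ and $z^{3/2}(\tau)\le\delta\,z(\tau)$ for $\delta$ small. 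Your absorption mechanism and order-of-smallness discussion are correct; only the place where Proposition~\ref{prop:ze} enters needs to be changed.
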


The result easily follows from  Proposition~\ref{prop:ze} below and Corollary~\ref{observe}. 
The proof of Proposition~\ref{prop:ze} is relegated to subsequent sections. We take it for granted here.
We note that the proof of Proposition~\ref{prop:z}  actually yields a radius $\delta_1\in (0,\delta_0]$ such that the above statement is true for all 
$\delta\in (0,\delta_1]$ with a constant $C$ depending on $\delta_1$, but not on $\delta$.

\begin{proof}[Proof of Proposition~\ref{prop:z}]
Let  $0\le s\le t<T_*$. Proposition \ref{prop:ze} provides  the estimate
 \begin{equation*}
        z(t) + \int_s^t z(\tau) \dd\tau \le c_5(z(s) +  e(t) + z^2(t)) + c_6\int_s^t \big(e(\tau) + z^{3/2}(\tau)\big) \dd \tau
    \end{equation*}
for some constants $c_j$ independent of $s$ and $t$. Corollary~\ref{observe} thus yields the inequality
\begin{equation*}
        z(t) + \int_s^t z(\tau) \dd\tau \le (c_5+ C_1 (c_5+c_6))z(s) +  c_5 z^2(t) 
           + (c_6+C_2 (c_5+c_6))\int_s^t z^{3/2}(\tau) \dd \tau.
    \end{equation*}
Recall that $z(\tau) $ is bounded by $\delta^2$ on $[0, T_*)$ by \eqref{est:delta}. Fixing a sufficiently small
radius $\delta\in (0, \delta_0]$, we can now absorb the superlinear terms involving $z^2$ and $z^{3/2}$ by the left-hand side.
\end{proof}

We first discuss the linear case, which was recently treated in \cite{El} in the autonomous case.
After that we prove our main result based on  Proposition \ref{prop:z}.

\begin{rem}\label{rem:linear}
For linear material laws $\varepsilon(x, E) = \varepsilon(x)$ and $\mu(x, H ) = \mu (x)$,
one can show the variant
 \beq\label{est:linear} 
 e_0(t) + \int_s^t e_0(\tau) \dd\tau \le C e_0(s)
 \eeq
 of Proposition~\ref{prop:z} 
for \emph{all} $t\ge s \ge 0$ and all initial data, see  \cite{El}. Here we have replaced $z$ by the usual 0-th order energy $e_0$.
 This estimate easily yields the  exponential decay for all data by a standard argument.
Indeed, since \eqref{est:linear} implies $e_0(\tau)\ge C^{-1} e_0(t)$, we infer the inequality 
 \beq\label{est:linear1} 
 (1 + (t-s)C^{-1}) e_0(t) \le C e_0(s)
 \eeq
for all $t\ge s\ge 0$. Fix the time $T>0$ with $C^2/(C+T)= 1/2$. Estimate \eqref{est:linear1} then provides the bound
$e_0(nT)\le \frac12 e_0((n-1)T)$ for all $n\in\NN$. Inductively,  it follows $e_0(nT)\le 2^{-n} e_0(0)$ and hence
the exponential decay 
\beq \label{est:linear2}
e_0(t) \le M\e^{-\omega t}e_0(0)
\eeq
for suitable constants $\omega,M>0$, where we use \eqref{est:linear} once more.

Let now the coefficients $\varepsilon(t,x)$ and $\mu (t,x)$ depend on time $t\in\RR_+$.
If the supremum norms of $\partial_t\varepsilon$ and $\partial_t\mu $ are small enough, formula \eqref{eq:energy0} and the proof of 
Proposition~\ref{prop:lower} for $k=0$ imply the estimate \eqref{est:linear} also in this case. 
Then the exponential decay \eqref{est:linear2} follows as above.
\end{rem}

\emph{Proof of Theorem~\ref{thm:main}.}
We first show that $T_*=\infty$ if the radius $r>0$ in \eqref{est:data} is small enough. We suppose that 
$T_*<\infty$. Equation \eqref{eq:contr} then  yields $z(T_*)=\delta^2$, where $\delta$ is given by  Proposition~\ref{prop:z}.
On the other hand,  as in Remark~\ref{rem:linear} we deduce the inequality 
\beq\label{est:thm1} 
 (1 + (t-s)C^{-1}) z(t) \le C z(s)
 \eeq
from  Proposition~\ref{prop:z}, but now only for $0\le s\le t< T_*$ and initial data with $\|(E_0,H_0)\|_{\cH^3}^2 \le r^2$
for all radii $r\in (0, r(\delta)]$, where  $r(\delta)>0$ was introduced before
 \eqref{est:data}. The differentiated Maxwell system \eqref{eq:maxwell2} and the bounds from \eqref{est:z}
next yield 
\[z(0)\le c_0\, \|(E_0,H_0)\|_{\cH^3}^2\le   c_0 r^2\]
for a constant $c_0>0$. We now fix the radius
\begin{equation}
    \label{def:r} 
    r :=\min \Big\{r(\delta), \frac{\delta}{\sqrt{2c_0C}}\Big\}.
\end{equation}
Because of \eqref{est:thm1} with $s=0$, the number $z(t)$ is bounded by $\delta^2/2$ for $t<T_*$ and by continuity also for $t=T_*$.
This fact contradicts  $z(T_*)=\delta^2$, and hence it follows $T_*=\infty.$ We can now conclude the proof exactly as in 
Remark~\ref{rem:linear}.  \hfill $\Box$

\smallskip

In order to establish the main result of the paper, it  thus remains to prove Proposition \ref{prop:ze}. 
Necessary preparations are done in the following section.

\section{Auxiliary results}
\label{SECTION_AUXILIARY_RESULTS}

\subsection{$\mathbf{Curl}$--$\mathbf{div}$ estimates}

One can bound the $\cH^1$-norm of a field $u$ by its norms in  $\cH(\tcurl)\cap \cH(\tdiv)$ 
and the $\cH^ {1/2}$-norm of $\tr_t u$ or $\tr_n u$, see Corollary~XI.1.1 of \cite{DL}. 
In the next section, we will need a version of this result with regular, matrix-valued coefficients $a$. 
This fact does not directly follow from the case $a = I$ -- unless $a$ is scalar. It is stated 
in Remark~4 of \cite{El} with a brief indication of a proof.
For the convenience of the reader  we present a (different)  proof below.

\begin{prop}
    \label{prop:div-curl}
    Let $a \in  W^{1,\infty}(\Omega, \RR^{3+3}_{\sym})$ satisfy $a\ge \eta I$. 
    Suppose $u\in \cH(\tcurl)$ fulfills $\Div(a u)\in  L^2(\Omega)$ and $\tr_n (a u) \in \cH^{1/2}(\Gamma)$. 
    Then the vector field $u$ belongs to $\cH^1(\Omega)^3$ and fulfills
    \begin{equation*}
        \|u\|_{\cH^ 1(\Omega)} \le c\big(\|u\|_{\cH(\tcurl)} +\|\Div (a u)\|_{L^2(\Omega)}
                         + \|\tr_n(a u)\|_{\cH^{1/2}(\Gamma)}\big)=:c\kappa(u).
    \end{equation*}
\end{prop}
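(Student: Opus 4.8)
The plan is to \emph{localise} the problem and reduce it to two constant-geometry model estimates, one in the interior and one at a flattened piece of the boundary, handling the matrix coefficient by differentiating $u$ itself rather than by reducing to the scalar case. First I would fix a finite smooth partition of unity $\{\chi_\ell\}$ subordinate to a cover of $\ol\Omega$ by balls, each either contained in $\Omega$ or centred at a boundary point. From $\curl(\chi_\ell u)=\chi_\ell\curl u+\nabla\chi_\ell\times u$, $\Div(a\chi_\ell u)=\chi_\ell\Div(au)+au\cdot\nabla\chi_\ell$ and $\tr_n(a\chi_\ell u)=\chi_\ell\,\tr_n(au)$, each localised field $\chi_\ell u$ again lies in $\cH(\tcurl)$, has $L^2$ weighted divergence and $\cH^{1/2}$ weighted normal trace, all bounded by $\kappa(u)+\|u\|_{L^2(\Omega)}$. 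Hence it suffices to bound a single $\chi_\ell u$ in $\cH^1$ and sum. It is worth recording at the outset why no direct reduction to the case $a=I$ (Corollary~XI.1.1 of \cite{DL}) is possible: the identity $\curl(au)=a\,\curl u+\nabla a\times u\in L^2(\Omega)$ holds only for scalar $a$, so for matrix $a$ one genuinely has to control $\nabla u$ itself.

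For an interior patch I would extend $v:=\chi_\ell u$ by zero to $\RR^3$ and prove $v\in\cH^1(\RR^3)^3$ with $\|v\|_{\cH^1}\le c(\|\curl v\|_{L^2}+\|\Div(av)\|_{L^2}+\|v\|_{L^2})$. The algebraic heart is the ellipticity of the pair $(\curl,\Div\circ a)$: because $a(x)\ge\eta I$, splitting $\zeta\in\RR^3$ into its parts parallel and perpendicular to $\xi$ gives the symbol inequality $|\xi|\,|\zeta|\le c\big(|\xi\times\zeta|+|\xi\cdot a(x)\zeta|\big)$ for all $\xi\neq0$, with $c$ depending only on $\eta$ and $\|a\|_\infty$. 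Freezing the coefficient at the centre of the patch, this pointwise bound yields the estimate for smooth fields by Plancherel; the true $x$-dependence of $a$ only contributes commutator terms controlled by $\|a\|_{W^{1,\infty}}\|v\|_{L^2}$, together with a small multiple of $\|\nabla v\|_{L^2}$ that is absorbed once the patch is taken small enough. The regularity $v\in\cH^1$ itself is obtained by running the very same estimate on difference quotients $D_h^s v$ and letting $h\to0$, so that no a priori $\cH^1$ bound is assumed.

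For a boundary patch I would flatten $\Gamma$ by a $C^5$-diffeomorphism onto $\{x_3=0\}$. Under this change of variables $\curl$ and $\Div$ become first-order systems with the \emph{same} ellipticity, while the condition on $\tr_n(au)$ turns into a single scalar condition on the transformed normal component. I would then estimate only the tangential difference quotients $D_h^\tau v$ ($\tau\in\{1,2\}$) exactly as in the interior step; this now produces boundary integrals on $\{x_3=0\}$ that are controlled precisely by $\|\tr_n(au)\|_{\cH^{1/2}(\Gamma)}$, yielding $\partial_1 v,\partial_2 v\in L^2$. The normal derivative is recovered algebraically: the normal derivatives of the two tangential components are read off from the transformed $\curl v=f$ relation in terms of already controlled tangential derivatives and $f$, and the normal derivative of the normal component is read off from the transformed $\Div(av)=g$ relation, whose coefficient in front of that derivative is bounded below by $\eta$. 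Transforming back and collecting the bounds gives $\|v\|_{\cH^1}\le c\,\kappa(u)$ on the patch.

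Summing over $\ell$ then proves the proposition. The main obstacle is the boundary step: one must verify that after flattening the matrix-weighted condition on $\tr_n(au)$ still complements the elliptic pair, so that the normal derivative can indeed be solved for, and one must track the boundary terms arising in the tangential difference-quotient estimates carefully enough that they are absorbed by $\|\tr_n(au)\|_{\cH^{1/2}(\Gamma)}$ rather than by quantities that are not yet under control. The non-scalar nature of $a$ is felt only here, through the coupling it induces between the normal component and the remaining unknowns in the divergence relation.
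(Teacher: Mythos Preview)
Your approach is essentially correct and follows the classical Agmon--Douglis--Nirenberg strategy: localise, flatten, freeze coefficients, use the symbol ellipticity of $(\curl,\Div\circ a)$ for interior patches, and for boundary patches gain tangential regularity by difference quotients and then recover $\partial_\nu u$ algebraically from the $\curl$ and $\Div$ relations. The symbol inequality you state is correct, and the algebraic recovery of the normal derivative works because $a_{\nu\nu}\ge\eta$. The boundary step is, as you acknowledge, the place where most of the actual work would be; in particular the phrase ``exactly as in the interior step'' needs to be replaced by a genuine half-space argument (partial Fourier transform in the tangential variables, or an energy identity with boundary terms), since extension by zero across $\Gamma$ is not available.

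The paper's proof takes a completely different and much shorter route. After a partition of unity it reduces to the case of simply connected $\Omega$ with connected boundary, and then uses a \emph{Helmholtz-type decomposition}: since $\curl u\in\cH^\Gamma(\tdiv\,0)$, a vector-potential result (Theorem~2.9 of \cite{Ce}) gives $w\in\cH^1(\Omega)\cap\cH_0(\tdiv\,0)$ with $\curl w=\curl u$ and $\|w\|_{\cH^1}\le c\,\|\curl u\|_{L^2}$. Then $u-w$ is curl-free, hence $u-w=\nabla\ph$ for some $\ph\in\cH^1(\Omega)$, and the hypotheses become $\Div(a\nabla\ph)\in L^2$, $\tr_n(a\nabla\ph)\in\cH^{1/2}(\Gamma)$. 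This is exactly the scalar Neumann problem for a uniformly elliptic operator in divergence form, so $\ph\in\cH^2$ by standard elliptic regularity. Thus the matrix coefficient $a$ is handled entirely by invoking \emph{scalar} elliptic theory on the gradient part, while the curl part is taken care of once and for all by the vector-potential lemma. Your approach is more self-contained and would generalise more directly to higher-order or non-potential situations; the paper's approach is quicker because it delegates the hard analysis to two black boxes (the vector-potential result and scalar $\cH^2$-regularity) that are already available in the literature.
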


\begin{proof}
    There exists a finite partition of unity $\{\chi_i\}_{i}$ on $\ol{\Omega}$ such that the support
    of each $\chi_i$ is contained in a simply connected subset of $\ol{\Omega}$ with a connected $C^2$-boundary. 
    Since each $\chi_i$ is scalar, we obtain the estimate
    \begin{equation*}
        \|\chi_i u\|_{L^2(\Omega)} +\|\curl(\chi_i u)\|_{L^2(\Omega)} +\|\Div (a \chi_i u)\|_{L^2(\Omega)}
        + \|\tr_n(a \chi_i u)\|_{\cH^{1/2}(\Gamma)} \le c\kappa(u).
    \end{equation*}
    We can thus assume that $\Gamma$ is connected. In this case, $\curl u$ belongs to $\cH^\Gamma(\tdiv \, 0)$ 
    and Theorem~2.9 of \cite{Ce} yields a vector field 
    $w \in \cH^1(\Omega) \cap H_0(\tdiv \, 0)$ with $\curl u =\curl w$ and $\|w\|_{\cH^1} \le c\,\|\curl u\|_{L^2}$.
    As the difference $u-w$ is an element of $\cH(\tcurl \, 0)$, it is represented by $u-w=\nabla \ph$ for a function
    $\ph \in \cH^1(\Omega)$ by Theorem~2.8 in \cite{Ce}. We obtain
    \begin{align*} 
        \Div(a \nabla \ph)&= \Div(a u)- \Div(a w) \ \in  L^2(\Omega),\\
        \tr_n (a\nabla \ph)&=  \tr_n(a u)- \tr_n(a w) \ \in  \cH^{1/2}(\Gamma),
    \end{align*}
    because of the assumptions and the fact $w\in \cH^1(\Omega)$.
    Due to the uniform ellipticity, $\ph$ thus is an element of $\cH^2(\Omega)$ satisfying
    \begin{equation*}
        \|\ph\|_{\cH^2(\Omega)} \le c \big( \|\Div(a u)\|_{ L^2(\Omega)} + 
        \|\tr_n(a u)\|_{\cH^{1/2}(\Gamma)} +  \|w \|_{\cH^1(\Omega)} \big) \le c\kappa(u).
    \end{equation*}
    The assertion now follows from the equation $u=w+\nabla \ph$.
\end{proof}

\subsection{Geometry: coordinate transformation and differential calculus}

For a fixed distance $\rho>0$, on the collar $\Gamma_\rho = \{x\in\ol{\Omega}\,|\, \operatorname{dist}(x,\Gamma) < \rho\}$,
we can find functions $\tau^1, \tau^2, \nu\in C^4(\Gamma_\rho,\RR^3)$ such that the
vectors $\{\tau^1(x), \tau^2(x), \nu(x)\}$ form an orthonormal basis of $\RR^3$ for each point $x \in \Gamma_\rho$ 
and $\nu$ extends the outer unit normal at $\Gamma$. 
Hence, $\tau^1$ and $\tau^2$ span the tangential planes at $\Gamma$. 
For $\xi, \zeta \in \{\tau^1,\tau^2,\nu\}$, $u \in \RR^3$ and $a\in \RR^{3\times 3}$, we set
\begin{equation*}
    \partial_\xi =\sum\nolimits_j \xi_j \partial_j , \quad u_\xi= u\cdot \xi, \quad 
    u^\xi= u_\xi \xi,\quad  u^\tau = u_{\tau^1}\tau^1 +  u_{\tau^2}\tau^2,\quad  a_{\xi\zeta}= \xi^\top a \zeta.
\end{equation*}
We state several calculus formulas, which are extensively exploited in the next section. 
In the following, it is always assumed that the functions involved are sufficiently regular.
We can switch between the derivatives of the coefficient $u_\xi$ and the component $u^\xi$ up to a lower-order term since
\begin{equation*}
    \partial_\zeta u^\xi = \partial_\zeta u_\xi  \xi +  u_\xi \partial_\zeta \xi.
\end{equation*}
The commutator of tangential derivatives and traces
\begin{equation*}
    \partial_\tau \tr_t u = \partial_{\tau} (u\times \nu) =  \tr_t\partial_\tau u  + u\times  \partial_\tau \nu  \qquad \text{ on \ }\Gamma
\end{equation*}
is also of lower order. The gradient of a scalar function $\ph$ is expanded as 
\begin{equation*}
    \nabla \ph = \sum\nolimits_\xi \xi\cdot (\nabla \ph)\,\xi =  \sum\nolimits_\xi \xi \partial_\xi\ph,
\end{equation*}
so that $\partial_j = \sum_\xi \xi_j \partial_\xi$ for $j\in\{1,2,3\}$. To express the $\curl$ operator, we use the matrices
\begin{align*}
    J_1 &= \begin{pmatrix}
        0 &0 &0 \\
        0 &0 &-1 \\
        0 &1 &0
       \end{pmatrix},
        \quad
    J_2 =  \begin{pmatrix}
        0 &0 &1 \\
        0 &0 &0 \\
        -1 &0 &0
        \end{pmatrix},
        \quad
    J_3 = \begin{pmatrix}
        0 &-1 &0 \\
        1 &0 &0 \\
        0 &0 &0
       \end{pmatrix}, \\
    J(\xi) &= \sum\nolimits_j \xi_j J_j\,.
\end{align*} 
Because of 
\begin{align*}
    \CURL u &= \partial_1 [0,-u_3,u_2]^\top + \partial_2 [u_3,0, -u_1]^\top + \partial_3 [-u_2,u_1,0]^\top, 
\end{align*}
we have 
\begin{equation*}
    \curl =\sum\nolimits_j J_j\partial_j = \sum\nolimits_{j,\xi} J_j \xi_j \partial_\xi =\sum\nolimits_{\xi} J(\xi)\partial_\xi.
\end{equation*}
Observe that the kernel of $J(\nu)$ is spanned by $\nu$. 
Hence, after factoring out the null space, we can write
$J(\nu)u = J(\nu)u^\tau$, and the restriction of $J(\nu)$ to $\operatorname{span}\{\tau^1,\tau^2\}$ has an inverse $R(\nu)$.

In order to produce estimates with additional, say $\cH^1(\Omega)$--, spatial regularity, 
one typically exploits that the boundary value problem is non-characteristic. 
However, the Maxwell system is characteristic since $J(\nu)$ has the kernel $\operatorname{span}\{\nu\}$.
In order to obtain regularity in the normal direction, we employ the ``$\CURL$-$\DIV$-strategy.''
The $\CURL$ operator contains the normal derivative of the tangential components,
while the divergence condition will provide estimates  for the normal derivatives of the normal component via an ordinary differential equation.
This procedure is carried out in the next subsection.

\subsection{Representation of normal derivatives} \label{subsec:curl-div}

The following construction is based on an adaptation of the well-known ADN (Agmon--Douglis--Nirenberg) method from the elliptic theory. 
We begin by solving the equation $\curl u = f$ for normal derivatives of the tangential components of $u$.
By expanding  
\begin{equation*}
    \curl u = J(\nu) (\partial_{\nu} u)^\tau  + J(\tau^1) \partial_{\tau^1} u  + J(\tau^2) \partial_{\tau^2} u,  
\end{equation*}
we obtain
\begin{equation}
    \label{eq:curl}
    \partial_\nu u^\tau = \sum\nolimits_i (\partial_\nu \tau^i \,u_{\tau^i}+ \tau^i \partial_\nu \tau^i \cdot u)
    +  R(\nu)\Big(f - \sum\nolimits_i J(\tau^i)\partial_{\tau^i} u \Big)
\end{equation}
and hence
\begin{equation}
    \label{eq:curl1}
    \partial_\nu u^\tau = 
    R(\nu)\Big(f - \sum\nolimits_i J(\tau^i)\partial_{\tau^i} u \Big) + \mathrm{l.o.t.}(u),
\end{equation}
where $\mathrm{l.o.t.}(u)$ denote lower-order terms depending on $u$, 
but not on its derivatives.

In order to recover the normal derivative of the normal component of $u$, we resort to the divergence operator. 
The divergence of a vector field $u$ can be expressed as
\begin{equation*}
    \Div u = \sum\nolimits_j \partial_j \sum\nolimits_\xi u_\xi \xi_j 
    = \sum\nolimits_\xi \big(\partial_\xi u_\xi + \Div(\xi) u_\xi\big).
\end{equation*}
Letting $\ph=\Div(a u)$ for a matrix-valued function $a$, we derive
\begin{align}
    \label{eq:div-nu}
    \begin{split}
    \Div(a u)&= \sum\nolimits_{\xi,\zeta} \partial_\xi (\xi^\top a \zeta u_\zeta)  
         + \sum\nolimits_{\xi} \Div(\xi) \,\xi^\top  a u \\
    &= \sum\nolimits_{\xi,\zeta} (a_{\xi\zeta} \partial_\xi u_\zeta +  \partial_\xi a _{\xi\zeta} u_\zeta) 
      + \sum\nolimits_{\xi} \Div(\xi) \,\xi^ \top a u, \\
    a_{\nu\nu} \partial_\nu u_\nu  & = \ph - \sum_{(\xi,\zeta)\neq(\nu,\nu)} a_{\xi\zeta} \partial_\xi u_\zeta 
     -\sum_{\xi,\zeta} \partial_\xi a_{\xi\zeta} u_\zeta -\sum_{\xi} \Div(\xi) \,\xi^\top a u \\
    &  =:  \ph - D(a)u,
    \end{split}
\end{align}
where $D(a) u$ contains all tangential derivatives and normal derivatives of tangential components of $u$
plus lower order terms.
Next, let $a \in W^{1,\infty}(\Omega_T, \RR^{3\times 3}_{\sym})$ be uniformly positive definite,
$u \in C^1\big(J, \cH^1(\Omega)\big)^3$, and $\psi\in L^2(\Omega_J)$. In view of formula \eqref{eq:div0}, we look at the equation
\begin{equation}
    \label{eq:div1}
    \Div \big(a(t)u(t)\big)= \Div \big(a(0)u(0)\big)-\int_0^t \big(\Div(\sigma u(s)) + \psi(s)\big)\dd s
\end{equation}
for $0\le t \le T$. (In (\ref{eq:div0}) we have $\psi =0$.) We set $\gamma = \sigma_{\nu\nu}/ a_{\nu\nu}$ and  
$\Gamma(t, s)= \exp(-\int_s^t \gamma(\tau)\dd \tau)$.
Equations \eqref{eq:div-nu} and \eqref{eq:div1} yield
\begin{align*}
    a_{\nu\nu}(t) \partial_\nu u_\nu(t) &= \Div\big(a(0) u(0)\big)- D\big(a(t)\big) u(t) \\
    &\qquad -\int_0^t \big(\gamma(s) 
    a_{\nu\nu}(s) \partial_\nu u_\nu(s)+ D\big(\sigma) u(s) + \psi(s)\big)\dd s.
\end{align*}   
Differentiating with respect to $t$ and solving the resulting ODE, we obtain
\begin{align} 
    \label{eq:div-nu-sigma}
    a_{\nu\nu}(t) \partial_\nu u_\nu(t)
    &= \Gamma(t,0)a_{\nu\nu}(0)\partial_\nu u_\nu(0)
      - \int_0^t\Gamma(t,s)\big(D(\sigma)u(s)+\psi(s) +\partial_s \big(D(a(s)) u(s)\big)\big)\dd s\notag \\
    &= \Gamma(t,0)\Div(a(0) u(0))- D(a(t)) u(t)  \notag\\
   &\qquad  + \int_0^t\Gamma(t,s)\big(\gamma(s)D\big(a(s)\big)u(s)- D(\sigma)u(s) - \psi(s)\big)\dd s,
\end{align}  
where $\psi$ is the same as in  (\ref{eq:div1}) and $D(a)$ is defined in  \eqref{eq:div-nu}.

\section{A regularity result: higher order energy bounds}
\label{SECTION_REGULARITY_RESULT}

In this section we show that $\partial_t^k E$ and $\partial_t^k H$ can be bounded in $\cH^{3-k}$ for $k\in\{0,1,2\}$ by 
the $L^2$-norms of $\partial_t^k E$ and $\partial_t^k H$.  This astonishing fact is a crucial ingredient of our reasoning, 
and its proof is quite demanding.  In contrast to our situation, when studying linear autonomous  problems
such regularity estimates readily follow from  semigroup theory combined with a ``good'' characterization of 
the domains of generators and their powers -- 
the latter often is  a consequence of the theory of strongly elliptic operators. 
In our case, semigroup tools are not available. Instead we  proceed in line with the ADN approach and the $\DIV$-$\CURL$-strategy
using the techniques discussed in the previous section. We sketch the main ideas.

The $\cH^1$--norm of $\partial_t^k H$ with $k\in\{0,1,2\}$ can easily be estimated by means of the ``elliptic'' 
$\tcurl$--$\tdiv$ estimates from Proposition~\ref{prop:div-curl} because we 
control the curl and the divergence of $\partial_t^k H$ via the time differentiated Maxwell system  \eqref{eq:maxwell1} 
and \eqref{eq:div}. Aiming at higher space regularity, we can apply the above strategy to tangential derivatives
of $\partial_t^k H$ only, whereas non-tangential derivatives destroy the boundary condition in  \eqref{eq:maxwell1}.
The normal derivatives of the fields are treated similarly as in the local well-posedness theory from \cite{Sp0, Sp1, Sp2}: 
Their tangential components are read off the differentiated Maxwell system using 
the expansion \eqref{eq:curl} of the $\tcurl$-operator, while the normal components are bounded employing 
the divergence condition \eqref{eq:div} and  the formula \eqref{eq:div-nu}. In these arguments we have to restrict 
ourselves to fields localized near the boundary. The localized fields in the interior can be controlled
more easily since the boundary conditions become trivial for them.

The electric fields have less favorable divergence properties because of the conductivity term in \eqref{eq:maxwell1}. 
Instead of the $\tcurl$-$\tdiv$ estimates from Proposition~\ref{prop:div-curl}, we thus employ the energy bound
of the system \eqref{eq:maxwell-dx} that arises by differentiating the Maxwell equations in time and tangential directions.
The normal components are again treated by the $\tcurl$-$\tdiv$-strategy indicated in the previous paragraph. However,
to handle the extra divergence term in \eqref{eq:div} caused by the conductivity,  we need the more sophisticated
divergence formula \eqref{eq:div-nu-sigma} which relies on an ODE derived from \eqref{eq:div}.
This program is carried out by iteration on the space regularity. In each step one has to start with the magnetic fields
in order to use their better properties when estimating the electric ones. 


The following result is the main technical ingredient of the paper. As explained in Section~\ref{SECTION_HIGHER_ENERGY_OBSERVABILITY},
Propositions~\ref{prop:energy}, \ref{prop:lower} and \ref{prop:ze} imply  Proposition \ref{prop:z} which in turn yields our 
main Theorem~\ref{thm:main}.

\begin{prop}
    \label{prop:ze}
    We impose the conditions of Theorem~\ref{thm:main} with the exception of the simple connectedness of $\Omega$.
     Then the solutions $(E, H)$ to the Maxwell system (\ref{eq:maxwell0}) satisfy the inequality 
    \begin{equation*}
        z(t) + \int_s^t z(\tau) \dd\tau \le c_5(z(s) +  e(t) + z^2(t)) + c_6\int_s^t \big(e(\tau) + z^{3/2}(\tau)\big) \dd \tau
    \end{equation*}
    for all $0\le s\le t<T_*$, where $z$ and $e$ are defined in \eqref{def:dez} and the constants $c_j$ do not depend on $t$ and $s$.
\end{prop}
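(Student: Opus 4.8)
The plan is to bootstrap full $\cH^{3-k}$ spatial control of $\partial_t^k(E,H)$ out of the $L^2$ control of time derivatives encoded in $e$, proceeding by an iteration over the spatial regularity level $\ell\in\{1,2,3\}$ and, within each level, over the number of normal derivatives. After a partition of unity that reduces matters to the interior of $\Omega$ (where the boundary conditions are void and standard elliptic regularity applies) and to a boundary collar $\Gamma_\rho$, where the orthonormal frame $\{\tau^1,\tau^2,\nu\}$ and the calculus identities of Section~\ref{SECTION_AUXILIARY_RESULTS} are available, each step treats the magnetic field \emph{first}, exploiting its superior divergence structure, and only afterwards the electric field, whose weaker properties force the use of the energy identity. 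At the top time level $k=3$ the relevant part of $z$ is just $\|\partial_t^3E\|_{L^2}^2+\|\partial_t^3H\|_{L^2}^2$, which is bounded by $c\,e(t)$ directly from \eqref{def:dez} and the lower ellipticity bound \eqref{est:coeff-lower}.

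For $\partial_t^kH$ with $k\in\{0,1,2\}$ and its tangential derivatives I would apply the matrix-weighted $\curl$-$\Div$ estimate of Proposition~\ref{prop:div-curl} with $a=\wh\mu_k$. The curl is read off the first magnetic equation in \eqref{eq:maxwell1}, namely $\curl\partial_t^kH=\partial_t(\wh\ep_k\partial_t^kE)+\sigma\partial_t^kE+\partial_t f_k$, so that for $k\le2$ its $L^2$-norm is controlled by the one-higher time derivative $\partial_t^{k+1}E$ and by $\partial_t^kE$, hence by $e^{1/2}$, up to the commutator $\partial_t f_k$, which \eqref{est:z} bounds in $\cH^{3-k}$ and which, being quadratic in the fields, ultimately contributes to the superlinear $z^{3/2}$ and $z^2$ terms. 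The divergence and the normal trace are supplied by \eqref{eq:div} and the boundary condition in \eqref{eq:maxwell1}, with right-hand sides $\Div g_k$ and $\tr_n g_k$ that are again quadratic and therefore absorbed into $z^{3/2}$ and $z^2$. Replacing $\partial_t^kH$ by $\partial_\tau^\alpha\partial_t^kH$ and commuting the tangential derivatives through the $\curl$, $\Div$ and $\tr_n$ (the commutators being of lower order by the identities of Section~\ref{SECTION_AUXILIARY_RESULTS}) raises the tangential-plus-time regularity one step at a time.

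The electric field admits no usable divergence relation at its own level because of the conductivity term $\sigma\partial_t^{k-1}E$ in \eqref{eq:div}; for $\partial_t^kE$ and its tangential derivatives I would instead invoke the energy identity of Lemma~\ref{lem:energy}, applied to the time- and tangentially-differentiated version of \eqref{eq:maxwell2}, whose magnetic forcing is already controlled by the $H$-bounds of the previous step. The resulting identity bounds the current tangential-and-time $L^2$-norms of $E$ by their values at time $s$ (inside $z(s)$), the boundary term generated by $\tr_tE=0$ up to a lower-order commutator, and forcing integrals that yield $\int_s^t e$ and $\int_s^t z^{3/2}$. The missing \emph{normal} derivatives of both fields are then recovered by the $\curl$-$\Div$ strategy of Subsection~\ref{subsec:curl-div}: the tangential components $\partial_\nu u^\tau$ come from \eqref{eq:curl1} by reading the curl off the Maxwell system (again producing $e$ at the current time), while the normal component $\partial_\nu u_\nu$ comes from the divergence, via the algebraic identity \eqref{eq:div-nu} with $a=\wh\mu_k$ for $H$, and via the integrated formula \eqref{eq:div-nu-sigma} for $E$. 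Collecting the contributions up to $\ell=3$ and using \eqref{est:z} to dominate every commutator yields the asserted inequality.

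The main obstacle is the normal component of the electric field. Since \eqref{eq:div} controls only $\Div(\ep^\D(E)\partial_t^kE)$ up to the conductivity contribution $\Div(\sigma\partial_t^{k-1}E)$, the estimate cannot be closed algebraically and one must integrate the ordinary differential equation behind \eqref{eq:div-nu-sigma}; it is precisely this integration over $[s,t]$ that produces the endpoint term $e(t)$, the initial datum inside $z(s)$, and the time integral $\int_s^t e$ on the right-hand side. The anisotropy enters through the coefficient $\gamma=\sigma_{\nu\nu}/(\ep^\D)_{\nu\nu}$ and through the operator $D(\ep^\D)E$ in \eqref{eq:div-nu}, which couples the normal-normal block $(\ep^\D)_{\nu\nu}\partial_\nu E_\nu$ to all tangential and mixed derivatives, so that the regularity levels do not decouple and the iteration must be ordered with great care. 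Finally, ensuring that all constants are uniform in $s$, $t$ and $T_*$ — the property that makes the estimate usable in the barrier argument — requires that every invocation of Proposition~\ref{prop:div-curl}, Lemma~\ref{lem:energy} and \eqref{est:z} be carried out with constants depending only on the uniform bound \eqref{est:delta}, and this accounts for most of the technical length of the proof.
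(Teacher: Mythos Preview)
Your proposal is correct and follows essentially the same route as the paper: localization by a cut-off into interior and boundary collar, iteration over the spatial regularity level, treating $H$ first via the weighted $\curl$-$\Div$ estimate of Proposition~\ref{prop:div-curl} and $E$ afterwards via the energy identity of Lemma~\ref{lem:energy} on the tangentially differentiated system, and then recovering normal derivatives of both fields through the $\curl$-$\Div$ strategy of Subsection~\ref{subsec:curl-div}.

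One refinement worth noting: you write that for the normal component of $E$ ``the estimate cannot be closed algebraically and one must integrate the ordinary differential equation behind \eqref{eq:div-nu-sigma}.'' In fact the integrated formula \eqref{eq:div-nu-sigma} is only needed at the bottom time level $k=0$, where the divergence relation \eqref{eq:div0} genuinely contains a time integral. For $k\in\{1,2\}$ the divergence relation \eqref{eq:div} reads $\Div(\ep^\D(E)\partial_t^kE)=-\Div(\sigma\partial_t^{k-1}E+f_k)$, and the right-hand side involves only one space derivative of $\partial_t^{k-1}E$; since this sits at a \emph{lower} time level, it has already been estimated at the current spatial level in an inner iteration $k=0\to1\to2$, so the purely algebraic identity \eqref{eq:div-nu} suffices there. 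This is not a gap in your argument (the integrated formula would work too), but it explains why the paper's iteration at each spatial level runs over $k$ starting from $k=0$, and clarifies the ``great care'' you allude to in ordering the steps.
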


\begin{proof}
Let $(E, H)$ be a solution of \eqref{eq:maxwell0} on $J_*=[0,T_*)$ satisfying the bound \eqref{est:delta} and the 
divergence equations \eqref{eq:div0}. 
Take $k\in\ \{0,1,2\}$ and $0\le t <T_*$, where we let $s=0$ for simplicity. 
To localize the fields, we take scalar functions $\chi$ and $1 - \chi$ in $C^5(\ol{\Omega})$ having compact 
support in $\Omega \setminus \Gamma_{a/2}$ and $\Gamma_a$, respectively. 

We  already outlined our methods above. The proof is divided into several steps which we list before presenting the details.
\begin{itemize}
\item[1)] We estimate the $\cH^1$-norm of $\partial_t^k H$ using the $\CURL$-$\DIV$-estimates from Proposition~\ref{prop:div-curl}.
        
\item[2)] We bound all relevant derivatives of $E$ and $H$ in the interior using the time-space differentiated 
     Maxwell system \eqref{eq:maxwell-dx}. Here and in the other steps, highest order terms of $E$ appear on the right-hand  side which are
	absorbed later.

\item[3)] To complete the $\cH^1$-estimate, we treat $E$ near the boundary employing energy estimates for the tangential  derivatives
          and the $\tcurl$-$\tdiv$-strategy for the normal derivatives.
        These arguments rely on  the differentiated Maxwell system \eqref{eq:maxwell-tau} and variants thereof.

\item[4)] The $\cH^2$-estimates of $E$, $\partial _t E$, $H$, and $\partial_t H$ near the boundary  are carried out in a similar way,
          based on steps 1)--3).  
       
\item[5)] We handle the $\cH^3$--norm of $E$ and $H$ by iterating our techniques.  
\end{itemize}

\smallskip

\emph{1) Estimate of $\partial_t^k H$ in $\cH^1(\Omega)$.} 
These bounds are a direct  consequence of the elliptic $\CURL$-$\DIV$-estimate in Proposition~\ref{prop:div-curl}
since we can control the relevant quantities through the (time differentiated) Maxwell system \eqref{eq:maxwell1} and the divergence
equation \eqref{eq:div}. Indeed, using also the estimates  \eqref{est:z}, we obtain
\begin{align*}
    \big\|\curl \partial_t^k H(t)\big\|_{L^2(\Omega)} & \le c e_{k+1}^{1/2}(t) + cz(t)\delta_{k2},\\
    \big\|\Div (\wh\mu_k \partial_t^k H(t))\big\|_{L^2(\Omega)} &\le cz(t)\delta_{k2},\\
    \big\|\tr_n(\wh\mu_k\partial_t^k H(t))\big\|_{\cH^{1/2}(\Omega)} &\le cz(t)\delta_{k2},
\end{align*}
where $\delta_{k2}=1$ for $k=2$ and $\delta_{k2}=0$ for $k\in\{0,1\}$. Proposition~\ref{prop:div-curl} thus implies 
\begin{equation}
    \label{est:h-H1a}
    \begin{split}
    \big\|\partial_t^k H(t)\big\|_{\cH^1(\Omega)}^2 &\le ce_{k+1}(t) + cz^2(t)\delta_{k2},\\
    \int_0^t  \big\|\partial_t^k H(\tau)\big\|_{\cH^1(\Omega)}^2 \dd\tau 
    &\le c\int_0^t (e_{k+1}(\tau) + z^2(\tau)\delta_{k2})\dd\tau.
    \end{split}
\end{equation}
We stress that the inhomogeneities  in \eqref{eq:maxwell1} and  \eqref{eq:div} involving $f_2$ and $g_2$ 
are quadratic in $(E,H)$ and can thus be bounded by $z$ via \eqref{est:z}. This fact is essential in future estimates.

\smallskip

\emph{2) Estimates in the interior for $E$ and $H$}. 
We look at the localized fields  $\partial_t^k (\chi E)$ and $\partial_t^k (\chi H)$ whose support $\supp \chi$
is strictly separated from the boundary. Hence, their spatial derivatives satisfy the boundary
conditions of the Maxwell system so that we can treat the electric fields via energy bounds and the magnetic
ones via the $\tcurl$-$\tdiv$ estimates.

a) Let $\alpha\in\NN_0^3$ with $|\alpha|\le 3-k$. We apply  $\partial_x^\alpha \chi$ to the Maxwell system 
\eqref{eq:maxwell2}, deriving the equations
\begin{align}
    \label{eq:maxwell-dx}
    \ep^ \D(E)\, \partial_t \partial_x^\alpha \partial_t^k (\chi E)
    &= \curl \partial_x^\alpha\partial_t^k (\chi H) -\sigma \partial_x^\alpha\partial_t^k (\chi E) 
    + \partial_x^\alpha ([\chi,\curl]\partial_t^k H )\notag\\
    &\quad - \sum_{0\le \beta<\alpha} \binom{\alpha}{\beta} 
     \partial_x^{\alpha - \beta} (\sigma  +  \ep^ \D(E)) \,\partial_x^\beta\partial_t^k (\chi E) 
    -   \partial_x^\alpha(\chi \tilde{f}_k),  \notag \\
    \mu^ \D(H)\,\partial_t\partial_x^\alpha\partial_t^k (\chi H) &= -\curl \partial_x^\alpha\partial_t^k (\chi E) 
    -\partial_x^\alpha ([\chi,\curl]\partial_t^k E)  - \partial_x^\alpha(\chi\tilde{g}_k)\\
    &\quad - \sum_{0\le \beta<\alpha} \binom{\alpha}{\beta} 
    \partial_x^{\alpha - \beta} \mu^ \D(H) \,\partial_x^\beta\partial_t^k (\chi H), \notag\\
  \tr_t \partial_x^\alpha \partial_t^ k (\chi E) &=0, \qquad \tr_n \partial_x^\alpha \partial_t^k(\chi H)=0. \notag
\end{align}
Note that the commutator $m:=[\chi,\curl]$ is merely a multiplication operator. Lemma~\ref{lem:energy} and the 
estimates  \eqref{est:z} thus yield
\begin{align*}
    \|\partial_x^\alpha \partial_t^k &(\chi E)(t)\|_{L^2(\Omega)}^2 
    + \int_0^t \| \partial_x^\alpha \partial_t^k (\chi E)(\tau)\|_{L^2(\Omega)}^2 \dd \tau\\
    & \le c z(0) + c\int_0^t \big( z^{3/2}(\tau) +\|\partial_t^k (\chi E(\tau))\|_{\cH^ {|\alpha|-1}(\Omega)}^2
     +\|\partial_t^k (\chi H(\tau))\|^2_{\cH^ {|\alpha|-1}(\Omega)} \big)\dd \tau\\
    &\quad + c\int_{\Omega_t}\big(\partial_x^\alpha(m\partial_t^k H)\cdot  \partial_x^\alpha\partial_t^k(\chi E))
    -\partial_x^\alpha (m\partial_t^k E)\cdot  \partial_x^\alpha\partial_t^k (\chi H)\big)\dd (x, \tau).
\end{align*}
The former part of the last integral can be estimated by 
\begin{equation*}
    \frac14\int_0^t \big\| \partial_x^\alpha \partial_t^k (\chi E)(\tau)\big\|_{L^2(\Omega)}^2 \dd \tau
    + c \int_0^t \big\|\tilde \chi \partial_t^k H(\tau)\big\|_{\cH^ {|\alpha|}(\Omega)}^2 \dd \tau,
\end{equation*}
where  $\tilde{\chi}\in C_c^ \infty(\Omega \setminus \Gamma_{a/2})$ is another cut-off function being equal to 1 
on $\supp\chi$. The first summand is absorbed by the left-hand side, while the second one only involves $H$ and 
can be treated separately. The latter part of the integral on $\Omega_t$ is similarly bounded by
\begin{equation*}
    \theta\int_0^t \big\|\partial_t^k E(\tau)\big\|_{\cH^ {|\alpha|}(\Omega)}^2 \dd \tau
    + c(\theta) \int_0^t \big\| \tilde \chi \partial_t^k H(\tau)\big\|_{\cH^ {|\alpha|}(\Omega)}^2 \dd \tau
\end{equation*}
for an arbitrary (small) $\theta >0$. It follows
\begin{align}\label{est:e-int}
    \big\|\partial_x^\alpha \partial_t^k (\chi E)(t)&\big\|_{L^2(\Omega)}^2 
    + \int_0^t \big\| \partial_x^\alpha \partial_t^k (\chi E)(\tau)\big\|_{L^2(\Omega)}^2 \dd \tau\\
   &\le cz(0) + c\int_0^t(z^{3/2}(\tau)
       +  \big\|\partial_t^k (\chi E(\tau)\big\|_{\cH^ {|\alpha|-1}(\Omega)}^2)\dd\tau\notag\\
    &\qquad   + \theta\int_0^t \big\|   \partial_t^k E(\tau)\big\|_{\cH^{|\alpha|}(\Omega)}^2 \dd \tau
    + c(\theta) \int_0^t \big\| \tilde \chi \partial_t^k H(\tau)\big\|_{\cH^ {|\alpha|}(\Omega)}^2 \dd \tau.\notag
\end{align}

b) To treat $H$, we only need to look at the case $|\alpha|\le 2-k$. Equations \eqref{eq:div0} and \eqref{eq:div} 
yield
\begin{align}
    \label{eq:div-chi-h}
    \Div\big(\wh\mu_k \partial_x^\alpha \partial_t^k (\chi H)\big) 
    &= \partial_x^\alpha ([\Div, \chi] \wh\mu_k \partial_t^k H) - \sum_{0\le \beta <\alpha} \binom{\alpha}{\beta} 
    \Div\big(\partial_x^ {\alpha-\beta }\wh\mu_k\, \partial_x^\beta(\partial_t^k (\chi H)\big)
    -\partial_x^\alpha (\chi g_k).
\end{align}
Recalling formulas \eqref{eq:maxwell-dx} and \eqref{est:z}, we deduce
\begin{align*} 
    \big\|\curl \partial_x^\alpha\partial_t^k& (\chi H(t))\big\|_{L^2(\Omega)} 
    + \big\|\Div \partial_x^\alpha\partial_t^k (\chi H(t))\big\|_{L^2(\Omega)} \\
    &\le c \Big(\!z(t) + \big\|\partial_t^k \tilde{\chi} H(t)\big\|_{\cH^ {|\alpha|}(\Omega)}
    + \big\|\partial_t^{k+1}(\chi E(t))\big\|_{\cH^ {|\alpha|}(\Omega)} 
    + \big\|\partial_t^k (\chi E(t))\big\|_{\cH^{|\alpha|}(\Omega)}\Big)
 \end{align*} 
Proposition~\ref{prop:div-curl} now implies the inequalities
\begin{align} 
    \label{est:h-int}
    \big\|\partial_t^k \chi H(t)\big\|_{\cH^{|\alpha|+1}(\Omega)}^2
    &\le c \big(z^{2}(t) + \big\|\partial_t^k \tilde{\chi} H(t)\big\|_{\cH^ {|\alpha|}(\Omega)}^2
    + \max_{j \le k+1}\|\partial_t^j (\chi E(t))\|_{\cH^ {|\alpha|}(\Omega)}^2 \big),\\
    \int_0^t \big\|\partial_t^k \chi H(\tau)\big\|_{\cH^{|\alpha|+1}(\Omega)}^2\dd \tau
    & \le c \int_0^t \big(z^{2}(\tau) + \big\|\partial_t^k \tilde{\chi} H(\tau)\big\|_{\cH^ {|\alpha|}(\Omega)}^2
    + \max_{j \le k+1} \big\|\partial_t^j (\chi E(\tau))\big\|_{\cH^ {|\alpha|}(\Omega)}^2 \big) \dd\tau.\notag
\end{align}
Here, we can replace $\chi$ by $\tilde{\chi} $ from inequality \eqref{est:e-int} and $\tilde{\chi}$ by a function 
$\breve{\chi}\in C_c^\infty( \Omega \setminus \Gamma_{a/2})$ which is equal to 1 on $\supp \tilde{\chi}$.

We set $y_j(t) =\max_{0\leq k\le 3-j} \big\|\partial_t^k \chi (E(t),H(t))\big\|_{\cH^j}^2$.
The estimates \eqref{est:h-H1a}, \eqref{est:e-int} and \eqref{est:h-int} iteratively imply 
\begin{align}
    \label{est:int}
    y_j(t) + \int_0^t y_j(\tau) \dd\tau &\le  cz(0) +  c \big(e(t) + z^2(t)\big)
    +c(\theta)\int_0^t (e(\tau)+ z^ {3/2}(\tau))\dd \tau\notag\\
    &\qquad + \theta \max_{0\le k\le 3-j} \int_0^t \|\partial_t^k E(\tau)\|_{\cH^j(\Omega)}^2 \dd\tau
\end{align}
for any $\theta>0$ and $j\in\{1,2,3\}.$

\smallskip

\emph{3) Boundary-collar estimate of $\partial_t^k E$ in $\cH^1$.}
a) We write $\hat\chi = 1-\chi$ and $\partial_\tau = (\partial_{\tau^1}, \partial_{\tau^2})$. Let $\alpha\in\NN_0^2$ 
with $0<|\alpha| \le 3-k$. (For the later use, also higher-order space derivatives are treated.)  We localize the system 
near the boundary by including the cut-off $\hat\chi$ into the equations \eqref{eq:maxwell2}, and then apply 
$\partial_\tau^\alpha$ to the resulting system. The localized tangential-time derivatives of $(E,H)$  thus satisfy 
 \begin{align}   \label{eq:maxwell-tau}
\ep^ \D(E)\, \partial_t \partial_\tau^\alpha \partial_t^k (\hat\chi E)
  &= \curl \partial_\tau^\alpha\partial_t^k (\hat\chi H) -\sigma \partial_\tau^\alpha\partial_t^k (\hat\chi E) 
+[\partial_\tau^\alpha,\curl]\partial_t^k(\hat\chi H) + \partial_\tau^\alpha ([\hat\chi,\curl]\partial_t^k H)\notag\\
    &\quad - \sum_{0\le \beta<\alpha} \binom{\alpha}{\beta} 
    \partial_\tau^{\alpha - \beta} (\sigma  +  \ep^ \D(E)) \,\partial_\tau^\beta\partial_t^k (\hat\chi E) 
    -   \partial_\tau^\alpha(\hat\chi \tilde{f}_k),  \notag\\
    \mu^\D(H)\,\partial_t\partial_\tau^\alpha\partial_t^k (\hat\chi H) 
    &= -\curl \partial_\tau^\alpha\partial_t^k (\hat\chi E) 
      -\partial_\tau^\alpha ([\hat\chi,\curl]\partial_t^k E)  
      - [\partial_\tau^\alpha ,\curl]\partial_t^k (\hat\chi E) \\
    &\quad - \sum_{0\le \beta<\alpha} \binom{\alpha}{\beta} 
     \partial_\tau^{\alpha - \beta} \mu^ \D(H) \,\partial_\tau^\beta\partial_t^k (\hat\chi H) 
     - \partial_\tau^\alpha(\hat\chi\tilde{g}_k), \notag\\
    \tr_t \partial_\tau^\alpha \partial_t^ k (\hat\chi E) 
    &= [ \partial_\tau^\alpha, \tr_\tau] \partial_t^ k (\hat\chi E)=:\chi. \notag
\end{align}
The commutators $[\partial_\tau^\alpha ,\curl]$ are differential operators of order $| \alpha|$ with bounded 
coefficients, whereas $[\partial_{\tau}^{\alpha}, \tr_{\tau} ]$ is of  order $|\alpha |-1 $ 
on the boundary and hence a bounded operator from $\cH^{|\alpha|-1/2}(\Gamma)$ to $\cH^{1/2}(\Gamma) $.
%
We now use the energy identity in Lemma~\ref{lem:energy} with
$a =\ep^ \D(E)$, $b = \mu^\D(H)$, $u=\partial_\tau^\alpha \partial_t^k (\hat\chi E)$, and
$v = \partial_\tau^\alpha\partial_t^k (\hat\chi H)$. 
The commutator terms, the sums and the summands with $f_k$ and $g_k$ yield the inhomogeneities
$\ph$ and $\psi$, respectively. 
From Lemma~\ref{lem:energy} we deduce the inequality
\begin{align}\label{est:rhs-3}
 \big\|&\partial_\tau^\alpha \partial_t^k (\hat \chi E)(t)\big\|_{L^2(\Omega)}^2 
    + \int_0^t \big\|\partial_\tau^\alpha \partial_t^k (\hat\chi E)(\tau)\big\|_{L^2(\Omega)}^2 \dd \tau\notag\\
&\le cz(0) + c\int_{\Omega_t}\big(|\partial_t a u\cdot u| + |\partial_t b v\cdot v| +|\ph\cdot u|+|\psi\cdot v|\big)\dd (\tau,x) 
  +  c\int_{\Gamma_t} |\chi \cdot \tr_\tau v|\dd (\tau,x).
\end{align}
Several terms on the right-hand side are super-quadratic in $(E,H)$ and can be bounded by  $cz^{3/2}$ due to 
\eqref{est:z}. The quadratic ones need more care. The summands in $\ph\cdot u$ and $\psi\cdot v$ containing the 
commutators are less or equal to
\[\theta\int_0^t \big\|\partial_t^k E(\tau)\big\|_{\cH^{|\alpha|}(\Omega)}^2 \dd \tau
    + c(\theta) \int_0^t \big\|\tilde \chi \partial_t^k H(\tau)\big\|_{\cH^ {|\alpha|}(\Omega)}^2 \dd \tau\]
with any (small) constant $\theta>0$ and a cut-off $\tilde{\chi}\in C_c^\infty(\Gamma_a)$ being equal to 1 on $\supp \hat\chi$.
The boundary integral is estimated by the same expression, where we
use the dual paring $H^ {1/2}(\Gamma)\times H^ {-1/2}(\Gamma)$ and that $\partial_{\tau^i}$ belongs to 
$\cB(H^{1/2}(\Gamma), H^{-1/2}(\Gamma))$. The sums over $\beta$  give rise to the terms
\[\frac14\int_0^t \big\|\partial_\tau^\alpha(\partial_t^k \hat\chi E(\tau))\big\|_{L^2(\Omega)}^2 \dd \tau
    + c \int_0^t \big\|\hat \chi \partial_t^k E(\tau)\big\|_{\cH^ {|\alpha|-1}(\Omega)}^2 \dd \tau
     + c \int_0^t \big\|\hat \chi \partial_t^k H(\tau)\big\|_{\cH^ {|\alpha|}(\Omega)}^2 \dd \tau  \]
plus super-quadratic terms. We thus arrive at 
\begin{align}  \label{est:e-tau}
    \big\|\partial_\tau^\alpha \partial_t^k &(\hat \chi E)(t)\big\|_{L^2(\Omega)}^2 
    + \int_0^t \big\|\partial_\tau^\alpha \partial_t^k (\hat\chi E)(\tau)\big\|_{L^2(\Omega)}^2 \dd \tau\\
    &\le cz(0) + c(\theta) \int_0^t \big(\big\|\hat\chi\partial_t^k E(\tau)\big\|_{\cH^{|\alpha|-1}(\Omega)}^2
   + \big\|\tilde \chi \partial_t^k H(\tau)\big\|_{\cH^ {|\alpha|}(\Omega)}^2\big) \dd \tau \notag\\
 &\qquad   + \theta\int_0^t \big\|\partial_t^k E(\tau)\big\|_{\cH^{|\alpha|}(\Omega)}^2 \dd \tau
     + c\int_0^tz^{3/2}(\tau)\dd \tau. \notag
\end{align}

b) In order to finalize the $\cH^1$-estimate for the electric field, we must control the normal derivatives. 
Their tangential component is  determined by the $\CURL$-term in the Maxwell system. More precisely,
the second equation in  \eqref{eq:maxwell2}, formula  \eqref{eq:curl} and the estimate \eqref{est:z} imply 
\begin{equation}
    \label{est:e-nu-tau}
    \big\|\partial_\nu(\partial_t^k (\hat \chi E(t))^\tau \big\|_{L^2(\Omega)}^ 2\le c\big( e_{k+1}(t) + z^2(t)
    + \big\|\partial_\tau \partial_t^k (\hat \chi E(t))\big\|_{L^2(\Omega)}^ 2\big).
\end{equation}

For the normal component we use the $\DIV$-relations, where we also consider higher tangential derivatives
for later use. We first look at the case $k\in\{1,2\}$ and apply $\partial_{\tau}^\alpha \hat{\chi}$
to equation \eqref{eq:div} with $|\alpha|\le 2-k$. It follows
\begin{align}
    \label{eq:div-tau}
    \Div \big(\ep^ \D(E)\partial_\tau^\alpha \partial_t^ k(\hat\chi E)\big) &= - D(\ep^\D(E), \alpha) \partial_t^kE 
    - \Div(\sigma \partial_\tau^\alpha (\hat\chi \partial_t^{k-1}E)) \\
    &\qquad   -D(\sigma,\alpha)\partial_t^{k-1}E- \partial_\tau^\alpha( \hat\chi \Div f_k \big).\notag
\end{align}
Here we abbreviate the commutator terms
 \begin{equation*}
    D(a, \alpha) u := \partial_\tau^\alpha \big([\hat\chi,\Div] (au)\big) + [\partial_\tau^ \alpha,\Div](\hat\chi au)
    + \sum_{0\le \beta <\alpha} \binom{\alpha}{\beta} 
   \Div\big(\partial_\tau^ {\alpha-\beta }a\, \partial_\tau^\beta(\hat\chi u)\big)
\end{equation*}
for a matrix-valued function $a$ and a vector function $u$. 
Observe that $D(a, \alpha)$ is a differential operator of order $|\alpha|$ and that $|D(a,0)u|\le c\,|u|$. 
Below we treat the equality \eqref{eq:div-tau} by means of  formula \eqref{eq:div-nu}. For $k=0$ 
 the divergence equation contains a time integral
and initial data which are handled by means of identity \eqref{eq:div-nu-sigma}. To avoid terms which 
grow linearly in time, we have to derive another equation from \eqref {eq:maxwell0}, namely,
 \begin{align}   \label{eq:maxwell-tau-ep}
\partial_t (\ep(E)\partial_\tau^\alpha  (\hat\chi E))
  &= \curl \partial_\tau^\alpha (\hat\chi H) -\sigma \partial_\tau^\alpha (\hat\chi E) 
 -[\curl,\partial_\tau^\alpha](\hat\chi H) - \partial_\tau^\alpha ([\curl,\hat\chi] H) \notag\\
    &\quad - \sum_{0\le \beta<\alpha} \binom{\alpha}{\beta} 
    \partial_\tau^{\alpha - \beta} (\sigma  +  \ep(E)) \,\partial_\tau^\beta (\hat\chi E).
\end{align}
Writing $h$ for the sum of the three commutator terms, we derive the divergence relation 
\begin{align}\label{eq:div-tau0}
\Div \big(\ep(E(t)) \partial_\tau^\alpha (\hat\chi E(t))\big)
    = \Div \big(\ep(E_0)\partial_\tau^\alpha(\hat\chi E_0)\big) 
   - \int_0^t \big(\Div(\sigma \partial_\tau^\alpha (\hat\chi E(\tau))) + \Div h(\tau)\big)\dd\tau. 
\end{align}

c) To control $\partial_\nu E_\nu$, we use equation \eqref{eq:div-tau0} with $\alpha=0$ and identity \eqref{eq:div-nu-sigma},
where we put $a = \ep(E)$, $u=\hat\chi E$,  and $\psi= \Div h$. The function 
$\gamma=\sigma_{\nu\nu}/a_{\nu\nu}$ is bounded from below by $\gamma_0 = c\eta>0$. We then get the estimate
\begin{align*} 
    \big\|&\partial_\nu (\hat \chi E(t))_\nu\big\|_{L^2(\Omega)}^2\\
        &\le c\e^{-\gamma_0 t} z(0) + c\,\big(\|E(t)\|_{L^2(\Omega)}^2  + \|\partial_\tau (\hat \chi E(t))\|_{L^2(\Omega)}^2 
    + \|\partial_\nu (\hat \chi E(t))^\tau\|_{L^2(\Omega)}^ 2\big)\notag\\
    &\quad + c\int _0^t \e^{-\gamma_0 (t-\tau)}\big(\|E(\tau)\|_{L^2}^2  
    + \|\partial_\tau (\hat \chi E(\tau))\|_{L^2}^2 
    + \|\partial_\nu (\hat \chi E(\tau))^\tau\|_{L^2}^2+ \|H(\tau)\|_{\cH^1}^2 + z^2(\tau)\big)\dd s. \notag
\end{align*}
This bound together with equations \eqref{est:e-tau}, \eqref{est:e-nu-tau} and \eqref{est:h-H1a} now implies
\begin{align} 
    \label{est:e-nu-nu0}
    \big\|&\partial_\nu (\hat \chi E(t))_\nu\big\|_{L^2(\Omega)}^2 
    + \int _0^t \big\|\partial_\nu (\hat \chi E(s))_\nu\big\|_{L^2(\Omega)}^2\dd s\\
    &\le c\big(z(0)+e(t)+z^2(t)\big) + \theta \!\int_0^t  \|E(s)\|_{\cH^1(\Omega)}^2\dd s
    + c(\theta)\!\int_0^t \big(e(s) +z^{3/2}(s)\big)\dd s, \notag
\end{align}
where the small number $\theta$ comes from \eqref{est:e-tau}.
Combining \eqref{est:e-tau}, \eqref{est:e-nu-tau}, \eqref{est:e-nu-nu0} and \eqref{est:h-H1a}, we conclude
\begin{align*}
    \big\|&\hat \chi E(t)\big\|_{\cH^1(\Omega)}^2 
    + \int _0^t \big\|\hat \chi E(s)\big\|_{\cH^1(\Omega)}^2\dd s\\
    &\le c\big(z(0)+e(t)+z^2(t)\big) + \theta \int_0^t \|E(s)\|_{\cH^1(\Omega)}^2\dd s
    + c(\theta)\int_0^t \big(e(s) +z^{3/2}(s)\big)\dd s.\notag
\end{align*}

For $k\in\{1,2\}$ we proceed similarly using equation \eqref{eq:div-tau} with $\alpha=0$ and formula  
\eqref{eq:div-nu}  for the normal component.  Here the term $\|\partial_t^{k-1}\hat\chi E(t)\|_{\cH^1(\Omega)}^2$ 
appears on the right-hand  side,  which can be treated iteratively. We thus show the inequality
\begin{align} 
    \label{est:e-H1}
    \big\|\partial_t^k \hat \chi& E(t)\big\|_{\cH^1(\Omega)}^2 
    + \int _0^t \big\|\partial_t^k \hat \chi E(s)\big\|_{\cH^1(\Omega)}^2\dd s\\
    &\le c\big(z(0)+e(t)+z^2(t)\big) + \theta\! \int_0^t \big\|\partial_t^k E(s)\big\|_{\cH^1(\Omega)}^2\dd s
    + c(\theta)\!\int_0^t \big(e(s) +z^{3/2}(s)\big)\dd s\notag
\end{align}
for all $k\in\{0,1,2\}$. Both in this relation and in inequality \eqref{est:e-int} for $|\alpha|=1$, we now choose  
a sufficiently small $\theta>0$. Together with \eqref{est:h-H1a}, we derive our first order bound.
\begin{lemma}[$\cH^1$-estimate]\label{l:H1}
Let $k\in \{0,1,2\}$ and the assumptions of Theorem~\ref{thm:main} with the exception of the simple connectedness 
of $\Omega$ be satisfied.  Then we can estimate
    \begin{align} \label{est:H1}
        \big\|\partial_t^k  (E(t), H(t)\big)&\|_{\cH^1(\Omega)}^2 
        + \int _0^t \big\|\partial_t^k (E(s),H(s))\big\|_{\cH^1(\Omega)}^2\dd s\\
        &\le c\big(z(0)+e(t)+z^2(t)\big) +    c\int_0^t \big(e(s) +z^{3/2}(s)\big)\dd s, \notag
    \end{align}
    where the constant $c$ does not depend on time $t$.
    
\end{lemma}

\emph{4) Estimate  in $\cH^2$.}
While the bound of $H$ in $\cH^1$ was entirely based on the $\tcurl$-$\tdiv$-estimates of 
Proposition~\ref{prop:div-curl}, this is only partly possible in $\cH^2$ or $\cH^3$ since normal 
derivatives violate the boundary conditions.  We thus have to employ the $\curl$-$\tdiv$ strategy of 
Subsection~\ref{subsec:curl-div} also for $H$,  proceeding in multiple steps. We let $k\in\{0,1\}$.

a) We first control tangential space-time derivatives of $H$ in $\cH^1$ by means of 
$\tcurl$-$\tdiv$ estimates. Proposition~\ref{prop:div-curl} yields
\begin{eqnarray} \label{eq:H-H2} 
 \big\|\partial_\tau  \partial_t^ k \hat{\chi} H\big\|_{\cH^1(\Omega)} 
    \leq c\,\big(\|\CURL \partial_\tau  \partial_t^ k \hat{\chi}H\|_{L^2(\Omega)} 
        + \|\DIV \partial_\tau  \partial_t^ k \hat{\chi}H\|_{L^2(\Omega)} 
        + \|\tr_n \partial_\tau  \partial_t^ k \hat{\chi}H\|_{H^{1/2}(\Gamma)}\big).
\end{eqnarray}
From equations \eqref{eq:maxwell1}, \eqref{eq:div0} and \eqref{eq:div} we deduce
\begin{align*}
\tr_n\!\big(\wh\mu_k \partial_\tau \partial_t^ k(\hat\chi H)\big) 
 &=[\tr_n,\partial_\tau](\partial_t^k\hat\chi H) - \tr_n\big(\partial_\tau \wh\mu_k\,\partial_t^k(\hat\chi H)\big),\\
\Div\!\big(\wh\mu_k \partial_\tau \partial_t^ k(\hat\chi H)\big)
  &= \partial_\tau([\Div,\hat\chi]\wh\mu_k\partial_t^k H) - [\partial_\tau,\Div](\wh\mu_k\partial_t^ k(\hat\chi H))
                    - \Div(\partial_\tau \wh\mu_k \, \partial_t^ k(\hat\chi H)). \notag
\end{align*}
The  commutator $[\partial_{\tau}, \DIV]$ is of order one and the others are of order zero. For the 
$\tcurl$-relation  we can use the first equation in \eqref{eq:maxwell-tau} with $|\alpha|=1$.  By means of \eqref{est:z}, 
we estimate 
\begin{align}\label{div-curlbounds}
 \|\Div\!\big(\wh\mu_k \partial_\tau \partial_t^ k(\hat\chi H(t))\big)\|_{L^2(\Omega) } 
       &\leq c\, \| \partial_t^k H(t) \|_{\cH^1 (\Omega) }, \notag  \\
 \|\CURL\!\big(\partial_\tau  \partial_t^ k \hat{\chi}H(t)\big)\|_{L^2(\Omega) } 
       &\leq c\, \big( \|\partial_t^{k+1} E(t)\|_{\cH^1(\Omega) } + \|\partial_t^k (E(t),H(t)) \|_{\cH^1(\Omega) } \big), \\    
 \|\tr_n\!\big(\wh\mu_k \partial_\tau \partial_t^ k(\hat\chi H(t))\big)\|_{\cH^{1/2}(\Gamma)} 
        &\leq c\,\|\partial_t^k H(t)\|_{\cH^1(\Omega) }.\notag
\end{align}
Since $k+1\le 2$, inequalities \eqref{est:H1}, \eqref{eq:H-H2} and \eqref{div-curlbounds} now imply
\begin{align} 
    \label{est:h-Dtau-H1}
    \big\|\partial_\tau \partial_t^k  (\hat\chi H(t))&\big\|_{\cH^1(\Omega)}^2 
    + \int _0^t \big\|\partial_\tau\partial_t^k (\hat\chi H(s))\big\|_{\cH^1(\Omega)}^2\dd s\\
    &\le c\big(z(0)+e(t)+z^{2}(t)\big) +    c\int_0^t \big(e(s) +z^{3/2}(s)\big)\dd s.\notag
\end{align}

b) The estimate of the normal derivative of $H$ in $\cH^1$ will be based on the $\curl$-$\tdiv$ strategy.
We first solve in the first equation of \eqref{eq:maxwell-tau} with $\alpha =0$ for 
$\partial_\nu (\partial_t^k \hat\chi H(t))^\tau$ using formula \eqref{eq:curl}, and derive the inequality
\begin{equation*}
    \|\partial_\nu \partial_t^k(\hat\chi H(t))^\tau\|_{\cH^1(\Omega)}
     \le c\,\big(  \|\partial_\tau \partial_t^k(\hat\chi H(t))\|_{\cH^1(\Omega)} 
         + \|\partial_t^k(E(t),H(t))\|_{\cH^1(\Omega)} + \|\partial_t^{k+1}E(t)\|_{\cH^1(\Omega)}\big).
\end{equation*}       
Equations \eqref{est:H1} and  \eqref{est:h-Dtau-H1} thus allow us to bound  the tangential components by
\begin{align} 
    \label{est:h-Dnu-tau-H1}
    \big\|\partial_\nu \partial_t^k(\hat\chi H(t))^\tau&\big\|_{\cH^1(\Omega)}^2 
   + \int _0^t \big\|\partial_\nu \partial_t^k (\hat\chi H(s))^\tau\big\|_{\cH^ 1(\Omega)}^2\dd s\\
    &\le c\big(z(0)+e(t)+z^{2}(t)\big) + c\int_0^t \big(e(s) +z^{3/2}(s)\big)\dd s.\notag
\end{align}

As to the normal component, we apply formula \eqref{eq:div-nu} to the divergence equation \eqref{eq:div-chi-h} 
with $\alpha=0$ and $\hat\chi$ instead of $\chi$. 
The $\cH^1$-norm of $\partial_\nu (\hat\chi H(t))_\nu$ is thus controlled by that of 
$\hat\chi H(t)$, $\partial_\tau (\hat\chi H(t))$, and $\partial_\nu (\hat\chi H(t))^\tau$.
From \eqref{est:H1}, \eqref{est:h-Dtau-H1} and \eqref{est:h-Dnu-tau-H1}, we now conclude
\begin{align} 
    \label{est:h-Dnu-nu-H1}
    \big\|\partial_\nu  \partial_t^k (\hat\chi H(t))_\nu&\big\|_{\cH^1(\Omega)}^2 
    + \int _0^t \big\|\partial_\nu  \partial_t^k (\hat\chi H(s))_\nu\big\|_{\cH^ 1(\Omega)}^2\dd s\\
    &\le c \big(z(0)+e(t)+z^2(t)\big) + c\int_0^t \big(e(s) +z^{3/2}(s)\big)\dd s.\notag 
\end{align}
Combining the inequalities \eqref{est:h-Dtau-H1}, \eqref{est:h-Dnu-tau-H1}, \eqref{est:h-Dnu-nu-H1}, 
\eqref{est:h-int} and \eqref{est:H1}, we arrive at the $\cH^2$-estimate for the fields $H$ and $\partial_t H$
\begin{align} 
    \label{est:h-H2}
    \big\|\partial_t^k H(t)&\big\|_{\cH^2(\Omega)}^2 
    + \int _0^t \big\|\partial_t^k H(s)\big\|_{\cH^2(\Omega)}^2\dd s\\
    &\le c\big(z(0)+e(t)+z^2(t)\big) + c\int_0^t \big(e(s) +z^{3/2}(s)\big)\dd s.\notag
\end{align}

c) We now turn our attention to $E$. Let $|\alpha|=2$. The $L^2$-norms of the tangential derivatives 
$\partial_\tau^\alpha(\hat\chi\partial_t^k E)$ is already controlled
 via  inequalities \eqref{est:e-tau}, \eqref{est:H1}, and \eqref{est:h-H2} up to the term
\begin{equation*}
    \theta\int_0^t \big\| \partial_t^k E(\tau)\big\|_{\cH^{2}(\Omega)}^2 \dd \tau.
\end{equation*}
The second equation in \eqref{eq:maxwell-tau} with $|\alpha|=1$ and formula  \eqref{eq:curl} 
lead to the estimate
 \[\big\|\partial_\nu \big[\partial_\tau \partial_t^k(\hat\chi E(t))\big]^\tau\big\|_{L^2(\Omega)}
    \le c\big[ \|\partial_\tau^2 \partial_t^k(\hat\chi E(t))\|_{L^2(\Omega)}
      + \|\partial_t^k (E(t),\!H(t))\|_{\cH^1(\Omega)} + \|\partial_t^{k+1} E(t)\|_{\cH^1(\Omega)} + z(t)\big].\]
Combined with the above mentioned tangential  bound and the $\cH^1$--result  \eqref{est:H1}, we obtain 
\begin{align} \label{est:De-nu1-tau}
    \big\|&\partial_\nu \big(\partial_\tau \partial_t^k(\hat \chi E(t) \big)^\tau \big\|_{L^2}^2 
    + \big\|\partial_\tau^\alpha \partial_t^k(\hat\chi E(t))\big\|_{L^2}^2 
  + \int_0^t\Big[\big\|\partial_\nu \big(\partial_\tau \partial_t^k (\hat \chi E(s))\big)^\tau \big\|_{L^2}^2
     + \big\|\partial_\tau^\alpha \partial_t^k (\hat\chi E(s))\big\|_{L^2}^2\Big]\D s \notag\\
    &\le c(z(0)+e(t)+z^2(t)) + \theta\int_0^t  \| \partial_t^k E(s)\|_{\cH^2(\Omega)}^2\dd s
    + c(\theta) \int_0^t \big(e(s) +z^{3/2}(s)\big)\dd s 
\end{align}

d) For the normal component and $k=0$, we look at the divergence relation \eqref{eq:div-tau0}  with $|\alpha|=1$. 
As in \eqref{est:e-nu-nu0}, we deduce from \eqref{eq:div-nu-sigma} the estimate
\begin{align}     \label{est:e-nu2-tau}
    \big\|\partial_\nu \big( &\partial_\tau(\hat \chi E(t))\big)_\nu\big\|_{L^2(\Omega)}^2 
    + \int _0^t \big\|\partial_\nu \big(\partial_\tau(\hat \chi E(s))\big)_\nu \big\|_{L^2(\Omega)}^2\dd s\\
    &\le c(z(0)+e(t)+z^2(t)) + \theta \int_0^t  \| E(s)\|_{\cH^2(\Omega)}^2\dd s
    + c(\theta)\int_0^t  \big(e(s) +z^{3/2}(s)\big)\dd s. \notag
\end{align}
The two above inqualities imply 
\begin{align}
    \label{est:e-tau-H1} 
    \big\|\partial_\tau(&\hat \chi E(t))\big\|_{\cH^1(\Omega)}^2 
    + \int _0^t \big\|\partial_\tau(\hat \chi E(s))\big\|_{\cH^1(\Omega)}^2\dd s\\
    &\le c\big(z(0)+e(t)+z^2(t)\big) + \theta \int_0^t  \| E(s)\|_{\cH^2(\Omega)}^2\dd s
    + c(\theta)\int_0^t  \big(e(s) +z^{3/2}(s)\big)\dd s.\notag
\end{align}

To treat the case $k=1$, we start from  the divergence equation \eqref{eq:div-tau} with $|\alpha|=1$
and use formula \eqref{eq:div-nu}. Employing also estimates \eqref{est:De-nu1-tau}, \eqref{est:e-tau-H1} and \eqref{est:z}, we get
\begin{align} \label{est:De-nu2-tau} 
\big\|&\partial_\nu \big(\partial_\tau(\hat \chi \partial_t E(t))\big)_\nu\big\|_{L^2(\Omega)}^2 
 + \int _0^t \big\|\partial_\nu \big(\partial_\tau(\hat\chi\partial_t E(s))\big)_\nu\big\|_{L^2(\Omega)}^2\dd s\\
    &\le c(z(0)+e(t)+z^2(t)) + \theta \int_0^t  \big(\| E(s)\|_{\cH^2(\Omega)}^2
    +   \|\partial_t E(s)\|_{\cH^2(\Omega)}^2\big)\dd s
    + c(\theta)\int_0^t  \big(e(s) +z^{3/2}(s)\big)\dd s.\notag
\end{align}
Combined with inequality \eqref{est:De-nu1-tau}, this relation leads to 
\begin{align}
    \label{est:De-tau-H1} 
    \| &\partial_\tau\partial_t(\hat \chi E(t))\|_{\cH^1(\Omega)}^2 
    + \int _0^t \|\partial_\tau\partial_t(\hat \chi E(s))\|_{\cH^1(\Omega)}^2\dd s\\
    &\le c(z(0)+e(t)+z^2(t)) + \theta \int_0^t \big(\| E(s)\|_{\cH^2(\Omega)}^2+
       \|\partial_t E(s)\|_{\cH^2(\Omega)}^2\big)\dd s
    + c(\theta)\int_0^t  \big(e(s) +z^{3/2}(s)\big)\dd s.\notag
\end{align}

e) It remains to control the term $\partial_\nu^2 (\partial_t^k\hat\chi E)$. 
We first replace in system \eqref{eq:maxwell-tau} the derivative $\partial_\tau^\alpha$ by $\partial_\nu$.
The resulting second equation, the $\tcurl$-formula  \eqref{eq:curl} and estimates \eqref{est:z} allow us to bound 
\[\big\|\partial_\nu\big(\partial_\nu\partial_t^k(\hat\chi E(t))\big)^\tau\|_{L^2(\Omega)}
\le c\,\big(  \big\|\partial_\tau\partial_\nu\partial_t^k(\hat\chi E(t))\big\|_{L^2(\Omega)}
 + \max_{j\le 2}\big\|\partial_t^j(E(t), H(t))\big\|_{\cH^1(\Omega)} +z(t)\Big). \]
 The right-hand side can be controlled via inequalities  \eqref{est:H1}, \eqref{est:e-tau-H1},  and  \eqref{est:De-tau-H1}.

For the normal component we use the modifications of the divergence relations \eqref{eq:div-tau0} and 
\eqref{eq:div-tau} with  $\partial_\nu$ instead of $\partial_\tau^\alpha$. We then estimate 
$\partial_\nu\big(\partial_\nu\partial_t^k(\hat\chi E(t))\big)_\nu$ for $k\in\{0,1\}$
as in inequalities \eqref{est:e-nu2-tau} and \eqref{est:De-nu2-tau}. 
Here and in \eqref{est:int}, \eqref{est:e-tau-H1} and  \eqref{est:De-tau-H1} 
we take  a small $\theta>0$ to absorb the $\cH^2$-norms of $\partial_t^k E$ on the right-hand side. 
Using also \eqref{est:h-H2} for the $H$ field, we derive the desired bound in $\cH^2$.

\begin{lemma}[$\cH^2$-estimate]\label{l:H2}
Let $k\in \{0,1\}$ and the assumptions of Theorem~\ref{thm:main} with the exception of the simple connectedness 
of $\Omega$ be satisfied.  Then we can estimate
    \begin{align} 
        \label{est:H2}
        \big\|\partial_t^k (E(t), H(t))&\big\|_{\cH^2(\Omega)}^2 
        + \int _0^t \big\|\partial_t^k (E(s), H(s))\|_{\cH^2(\Omega)}^2\dd s\\
        &\le c\big(z(0)+e(t)+z^2(t)\big) +  c\int_0^t \big(e(s) + z^{3/2}(s)\big)\dd s,\notag
    \end{align}   
  where the constant $c$ does not depend on time $t$.
\end{lemma}

\smallskip

\emph{5) Estimate in $\cH^3$.}   
Since the reasoning is similar to the one presented above, we will omit unnecessesary details here. 
Let $ k= 0$. 

a) We again begin with the magnetic field $H$. We first look at the tangential derivative
$\partial_\tau^\alpha (\hat\chi E)$ with $|\alpha|=2$, where we proceed as in \eqref{est:h-Dtau-H1}
using $\tcurl$-$\tdiv$ estimates.
For $\xi,\zeta\in \{\nu,\tau^1,\tau^2\}$, differentiating the divergence relation \eqref{eq:div} we obtain
\begin{align}\label{eq:div-h-H2}
    \Div\!\big(\mu(H) \partial_\xi\partial_\zeta (\hat\chi H)\big)
    &= \partial_\xi \partial_\zeta([\Div,\hat\chi]\mu(H) H) - [\partial_\xi\partial_\zeta,\Div] (\mu(H)\hat\chi H) \\
    &\qquad-  \Div(\partial_\zeta \mu(H) \, \partial_\xi(\hat\chi H))
    -  \Div(\partial_\xi \mu(H) \, \partial_\zeta(\hat\chi H))
     - \Div(\partial_\xi \partial_\zeta\mu(H) \, \hat\chi H). \notag
\end{align}
Similary, the magnetic  boundary condition in \eqref{eq:maxwell0} yields
\[
  \tr_n\!\big(\mu(H) \partial_\tau^\alpha (\hat\chi H)\big)
   =  [\tr_n,\partial_\tau^\alpha] (\mu(H)\hat\chi H)    + 
  \tr_n\sum_{0\le\beta<\alpha}\binom{\alpha}{\beta}\partial_\tau^{\alpha-\beta}\mu(H) \,\partial_\tau^{\beta}(\hat\chi H).
\]
Using  \eqref{est:z},  from \eqref{eq:maxwell-tau} and the above formulas we deduce the estimates
\begin{align}
    \|\curl\!\big(\partial_\tau^{\alpha} \hat{\chi}H(t)\big)\|_{L^2(\Omega)} 
     &\leq c \big(\|\partial_t H(t)\|_{\cH^2(\Omega)} + \|(E(t),H(t)) \|_{\cH^2(\Omega)} +z(t)\big),\notag \\
\|\Div\!\big(\mu(H(t)) \partial_\tau^{\alpha} (\hat\chi H(t))\big) \|_{L^2(\Omega) } 
 &\leq c\,( \|H(t)\|_{\cH^2(\Omega)} + z(t)),\label{div-curlbounds1} \\
    \|\tr_n\!\big(\mu(H(t)) \partial_\tau^{\alpha} (\hat\chi H(t))\big)\|_{\cH^{1/2}(\Gamma)} 
    &\leq c\,( \|H(t)\|_{\cH^2(\Omega)} + z(t)).\notag
\end{align}
The second-order bound \eqref{est:H2} and Proposition~\ref{prop:div-curl} thus imply
\begin{align}\label{est:h-tau-tau-H1}
\big\|\partial_{\tau}^{\alpha}(\hat{\chi} H(t))\big\|_{\cH^1(\Omega)}^2 
  + \int_0^t\|\partial_{\tau}^{\alpha}(\hat{\chi} H(s))\|_{\cH^1(\Omega)}^2  \dd s
    \le c\big(z(0)+e(t)+z^2(t)\big) + c\int_0^t \big(e(s) +z^{3/2}(s)\big)\dd s.
\end{align}

To handle the mixed derivative $\partial_\nu\partial_\tau$, we use the first équation in \eqref{eq:maxwell-tau} 
with $|\alpha|=1$ and the $\tcurl$-formula \eqref{eq:curl}.  We can then bound the 
$\cH^1$-norm of $\partial_\nu (\partial_\tau(\hat\chi H(t)))^\tau$ by 
 \[ \|\partial_\tau^2(\hat\chi H(t))\|_{\cH^1(\Omega)} + 
   \max_{j\le 1} \|\partial_t^j (E(t), H(t))\big\|_{\cH^2(\Omega)} +z(t).\]
The normal component is treated as in  \eqref{est:h-Dnu-nu-H1}, based on the divergence
relation \eqref{eq:div-chi-h} with $|\alpha|=1$, $\chi$ replaced by $\hat\chi$, and $\partial_x^\alpha$
by $\partial_\tau$. By means of \eqref{eq:div-nu}
and  \eqref{est:z}, the $\cH^1$-norm of the function $\partial_\nu (\partial_\tau(\hat\chi H(t)))_\nu$
is thus controlled by that of $\partial_\nu (\partial_\tau(\hat\chi H(t)))^\tau$ and 
$\partial_\tau^2(\hat\chi H(t))$ plus lower order terms.
Combing these inequalities with \eqref{est:H2} and \eqref{est:h-tau-tau-H1}, we infer
\begin{align}\label{est:h-nu-tau-H1}
    \begin{split}
        \big\|\partial_\nu \partial_{\tau}(\hat{\chi} H(t))\big\|_{\cH^1(\Omega)}^2 
        &+ \int_0^t \|\partial_\nu \partial_{\tau}(\hat{\chi} H(s))\|_{\cH^1(\Omega)}^2  \dd s \\
        &\le c\big(z(0)+e(t)+z^2(t)\big) + c\int_0^t \big(e(s) +z^{3/2}(s)\big)\dd s.
    \end{split}
\end{align}

In this reasoning we can replace  $\partial_\tau$ by $\partial_\nu$, arriving at
\begin{align} \label{est:h-nu-nu-H1}
   \big\|\partial_\nu^2 (\hat{\chi} H(t))\big\|_{\cH^1}^2 
  + \int_0^t\! \|\partial_\nu^2 (\hat{\chi} H(s))\|_{\cH^1}^2  \dd s
   \le c\big(z(0)+e(t)+z^2(t)\big) + c\!\int_0^t \! \big(e(s) +z^{3/2}(s)\big)\dd s.
 \end{align}
Combined with \eqref{est:int}, the estimates   \eqref{est:h-tau-tau-H1},  \eqref{est:h-nu-tau-H1}
and  \eqref{est:h-nu-nu-H1} lead to
\begin{align} 
    \label{est:h-H3}
    \big\|H(t)\big\|_{\cH^3(\Omega)}^2 
    + \int_0^t \| H(s)\|_{\cH^3(\Omega)}^2\dd s
    \le c\big(z(0)+e(t)+z^2(t)\big) + c\int_0^t \big(e(s) +z^{3/2}(s)\big)\dd s.
\end{align}

b) We finally tackle $E$ in $\cH^3$. The third-order tangential derivatives $\partial_\tau^\alpha(\hat\chi E)$
were already treated in estimate \eqref{est:e-tau} with $k=0$, where the lower order-terms on the right-hand side are now
dominated by \eqref{est:H2} and \eqref{est:h-H3}. Let $|\beta|=2$. The second equation in \eqref{eq:maxwell-tau} with 
$|\alpha|=2$ and the $\tcurl$-formula \eqref{eq:curl}
allow us to bound $\partial_\nu(\partial_\tau^\beta(\hat\chi E))^\tau$ in the same fashion. 
The normal component $\partial_\nu(\partial_\tau^\beta(\hat\chi E))_\nu$ can also be controlled via equations 
\eqref{eq:div-tau0} and \eqref{eq:div-nu-sigma}.  We thus arrive at
\begin{align}   \label{est:E-tau2-H1} 
    \big\|& \partial_\tau^\beta(\hat \chi E(t))\big\|_{\cH^1(\Omega)}^2 
    + \int _0^t \big\|\partial_\tau^\beta(\hat \chi E(s))\big\|_{\cH^1(\Omega)}^2\dd s\\
    &\le c\big(z(0)+e(t)+z^2(t)\big) + \theta \int_0^t \big\| E(s)\big\|_{\cH^3(\Omega)}^2\dd s
    + c(\theta)\int_0^t \big(e(s) + z^{3/2}(s)\big)\dd s. \notag
\end{align}
 We replace in system \eqref{eq:maxwell-tau} the tangential derivative 
$\partial_\tau^\alpha$ with $\partial_\nu\partial_\tau$. The second equation therein and formula \eqref{eq:curl} 
provide control of the tangential component $\partial_\nu(\partial_\nu\partial_\tau(\hat\chi E))^\tau$ in $L^2$ via 
inequalities \eqref{est:E-tau2-H1} and \eqref{est:H2}. The related normal component can then be handled through  
the formula \eqref{eq:div-nu-sigma} and the divergence identity 
\eqref{eq:div-tau0} with $\partial_\nu\partial_\tau$ instead of $\partial_\tau^\alpha$. 
In this way we  show the estimate
\begin{align*}
    \big\|& \partial_\tau(\hat \chi E(t))\big\|_{\cH^2(\Omega)}^2 
    + \int _0^t \big\|\partial_\tau(\hat \chi E(s))\big\|_{\cH^2(\Omega)}^2\dd s\\
    &\le c\big(z(0)+e(t)+z^2(t)\big) + \theta \int_0^t  \big\| E(s, t)\big\|_{\cH^3(\Omega)}^2\dd s
    + c(\theta)\int_0^t \big(e(s) + z^{3/2}(s)\big)\dd s.\notag
\end{align*}
The remaining $\partial_\nu^ 3 (\hat \chi E)$-term is managed analogously, resulting in the inequality
\begin{align*}
    \big\|&\hat \chi E(t)\big\|_{\cH^3(\Omega)}^2 
    + \int _0^t \big\|\hat \chi E(s)\big\|_{\cH^3(\Omega)}^2\dd s\\
    &\le c\big(z(0)+e(t)+z^2(t)\big) + \theta \int_0^t \big\| E(s)\big\|_{\cH^3(\Omega)}^2\dd s
    + c(\theta)\int_0^t \big(e(s) + z^{3/2}(s)\big)\dd s.\notag
\end{align*}
 Fixing a sufficiently small number $\theta > 0$, the above inequalities and the interior estimate
 \eqref{est:int} lead to the final bound
 \begin{align}     \label{est:e-H3}
    \big\|E(t)\big\|_{\cH^3(\Omega)}^2 
    + \int _0^t \big\|E(s)\big\|_{\cH^3(\Omega)}^2\dd s
    \le c\big(z(0)+e(t)+z^2(t)\big) + c\int_0^t \big(e(s) + z^{3/2}(s)\big)\dd s.
\end{align}
Equation \eqref{est:h-H3} and \eqref{est:e-H3} now furnish our last result.
\begin{lemma}{[$\cH^3$ estimate]}\label{l:H3}
Let  the assumptions of Theorem~\ref{thm:main} with the exception of the simple connectedness 
of $\Omega$ be satisfied.  Then we can estimate
    \begin{align*} 
        \big\| (E(t), H(t))\big\|_{\cH^3(\Omega)}^2 
        + \int _0^t \!\big\| (E(s), H(s))\big\|_{\cH^3(\Omega)}^3\dd s
        \le c\big(z(0)+e(t)+z^2(t)\big) +  c\!\int_0^t \big(e(s) + z^{3/2}(s)\big)\D s,
    \end{align*}   
  where the constant $c$ does not depend on time $t$.
\end{lemma}
Lemma~\ref{l:H1}, Lemma~\ref{l:H2} and Lemma~\ref{l:H3}  complete the proof of Proposition~\ref{prop:ze}. 
 \end{proof}

\bibliographystyle{plain}
\bibliography{bibliography}

\begin{thebibliography}{10}

\bibitem{AnPo2018}
A.~Anikushyn and M.~Pokojovy.
\newblock Global well-posedness and exponential stability for heterogeneous
  anisotropic {M}axwell's equations under a nonlinear boundary feedback with
  delay.
\newblock \url{https://arxiv.org/abs/1807.00098}, 2018.

\bibitem{BZ}
K.~Beauchard and E.~Zuazua.
\newblock Large time asymptotics for partially dissipative hyperbolic systems.
\newblock {\em Arch. Ration. Mech. Anal.}, 199(1):177--227, 2011.

\bibitem{Ce}
M.~Cessenat.
\newblock {\em Mathematical Methods in Electromagnetism. Linear Theory and
  Applications}.
\newblock World Scientific Publishing, River Edge (NJ), 1996.

\bibitem{DNS}
P.~D'Ancona, S.~Nicaise, and R.~Schnaubelt.
\newblock Blow-up for nonlinear {M}axwell equations.
\newblock {\em Electron.\ J. Differential Equations}, Paper No. 73, 2018.

\bibitem{DL}
R.~Dautray and J.-L. Lions.
\newblock {\em Spectral Theory and Applications}, volume~3 of {\em Mathematical
  Analysis and Numerical Methods for Science and Technology}.
\newblock Springer-Verlag, Berlin Heidelberg, 2000.

\bibitem{El}
M.~Eller.
\newblock Stability of the anisotropic {M}axwell equations with a conductivity
  term.
\newblock {\em Evol.\ Equ.\ Control Theory}.
\newblock To appear.

\bibitem{El12}
M.~Eller.
\newblock On symmetric hyperbolic boundary problems with nonhomogeneous
  conservative boundary conditions.
\newblock {\em SIAM J. Math.\ Anal.}, 44:1925--1949, 2012.

\bibitem{ElLaNi2002.2}
M.~Eller, J.~Lagnese, and S.~Nicaise.
\newblock Decay rates for solutions of a {M}axwell system with nonlinear
  boundary damping.
\newblock {\em Comput.\ Appl.\ Math.}, 21(1):135--165, 2002.

\bibitem{ET12}
M.~Eller and D.~Toundykov.
\newblock Carleman estimates for elliptic boundary value problems with
  applications to the stabilization of hyperbolic systems.
\newblock {\em Evol. Equ. Control Theory}, 1(2):271--296, 2012.

\bibitem{yamamoto}
M.~Eller and M.~Yamamoto.
\newblock A {C}arleman's inequality for the stationary anisotropic {M}axwell
  system.
\newblock {\em J. Math.\ Pure Appl.}, 9(86):449--462, 2006.

\bibitem{Gu}
O.~Gu\`es.
\newblock Probl\`eme mixte hyperbolique quasi-lin\'{e}aire caract\'{e}ristique.
\newblock {\em Comm. Partial Differential Equations}, 15(5):595--645, 1990.

\bibitem{Ka}
T.~Kato.
\newblock The {C}auchy problem for quasi-linear symmetric hyperbolic systems.
\newblock {\em Arch.\ Ration.\ Mech.\ Anal.}, 58(3):181--205, 1975.

\bibitem{komornik}
V.~Komornik.
\newblock Boundary stabilization, observation and control of {M}axwell's
  equations.
\newblock {\em Pan-American Math.\ J.}, 4:47--61, 1994.

\bibitem{lagnese}
J.~Lagnese.
\newblock Exact controllability of {M}axwell's equations in a general region.
\newblock {\em SIAM J. Control Optim.}, 27(2):374--388, 1989.

\bibitem{LPW}
I.~Lasiecka, M.~Pokojovy, and X.~Wan.
\newblock Global existence and exponential stability for a nonlinear
  thermoelastic {K}irchhoff-{L}ove plate.
\newblock {\em Nonlinear Anal.\ Real World Appl.}, 38:184--221, 2017.

\bibitem{LZ}
S.~Lucente and G.~Ziliotti.
\newblock Global existence for a quasilinear {M}axwell system.
\newblock {\em Rend. Istit. Mat. Univ. Trieste}, 31(suppl. 2):169--187, 2000.

\bibitem{MePoSH2007}
S.~A. Messaoudi, M.~Pokojovy, and B.~Said-Houari.
\newblock Nonlinear damped {T}imoshenko systems with second sound—global
  existence and exponential stability.
\newblock {\em Math.\ Methods Appl.\ Sci.}, 32(5):505--534, 2009.

\bibitem{MN}
J.~Moloney and A.~Newell.
\newblock {\em Nonlinear optics}.
\newblock Westview Press. Advanced Book Program, Boulder, CO, 2004.

\bibitem{MRRa2002}
J.~E. Mu\~n{o}z Rivera and R.~Racke.
\newblock Mildly dissipative nonlinear {T}imoshenko systems -- global existence
  and exponential stability.
\newblock {\em J. Math.\ Anal.\ Appl.}, 276:248--278, 2002.

\bibitem{NiPi2004}
S.~Nicaise and C.~Pignotti.
\newblock Internal stabilization of {M}axwell's equations in heterogeneous
  media.
\newblock {\em Abstr.\ Appl.\ Anal.}, 2005(7):791--811, 2005.

\bibitem{phung}
K.~D. Phung.
\newblock Contr\^{o}le de stabilization d'ondes \'{e}lectromagn\'{e}tiques.
\newblock {\em ESAIM Control Optim.\ Calc.\ Var.}, 5:87--137, 2000.

\bibitem{PiZa1995}
R.~H. Picard and W.~M. Zajaczkowski.
\newblock Local existence of solutions of impedance initial‐boundary value
  problem for non‐linear maxwell equations.
\newblock {\em Math.\ Methods Appl.\ Sci.}, 18(3):169--199, 1995.

\bibitem{Ra}
R.~Racke.
\newblock {\em Lectures on nonlinear evolution equations}.
\newblock Aspects of Mathematics, E19. Friedr. Vieweg \& Sohn, Braunschweig,
  1992.

\bibitem{Ra1985}
J.~Rauch.
\newblock Symmetric positive systems with boundary characteristic of constant
  multiplicity.
\newblock {\em Trans.\ Amer.\ Math.\ Soc.}, 291(1):167--187, 1985.

\bibitem{rauch}
J.~Rauch and M.~Taylor.
\newblock Exponential decay of solutions to hyperbolic equations in bounded
  domains.
\newblock {\em Indiana Univ.\ Math.\ J.}, 24(1):79--86, 1974.

\bibitem{Se1996}
P.~Secchi.
\newblock Well-posedness of characteristic symmetric hyperbolic systems.
\newblock {\em Arch.\ Ration.\ Mech.\ Anal.}, 134(2):155--197, 1996.

\bibitem{Spe}
J.~Speck.
\newblock The nonlinear stability of the trivial solution to the
  {M}axwell-{B}orn-{I}nfeld system.
\newblock {\em J. Math. Phys.}, 53(8):083703, 83, 2012.

\bibitem{Sp0}
M.~Spitz.
\newblock {\em Local wellposedness of nonlinear {M}axwell equations}.
\newblock PhD thesis, Karlsruhe Institute of Technology, Karlsruhe, 2017.

\bibitem{Sp2}
M.~Spitz.
\newblock Local wellposedness of nonlinear {M}axwell equations with perfectly
  conducting boundary conditions.
\newblock {\em J.\ Differential Equations}, 2018.
\newblock Online first.

\bibitem{Sp1}
M.~Spitz.
\newblock Regularity theory for nonautonomous {M}axwell equations with
  perfectly conducting boundary conditions.
\newblock \url{https://arxiv.org/abs/1805.00671}, 2018.

\end{thebibliography}


\end{document}